%549.tex/ 08/18/08
\documentclass[11pt]{article}

\usepackage{amsmath,amssymb}
\usepackage{latexsym}
\usepackage{graphicx}
\usepackage{bm}

\setlength{\topmargin}{-0.20cm}
\setlength{\textwidth}{15cm}
\setlength{\evensidemargin}{-0.15cm}
\setlength{\oddsidemargin}{0.5cm}
\setlength{\textheight}{21cm}

\newtheorem{thm}{Theorem}%[section]

\newtheorem{lem}[thm]{Lemma}
\newtheorem{rem}[thm]{Remark}

\newenvironment{proof}{\begin{trivlist}
                       \item[]{\bf Proof.}
                       \hspace{0cm}}{\hfill $\Box$
                       \end{trivlist}}

\begin{document}
\title{Dynamical Systems Gradient method for solving\\ nonlinear equations
with monotone operators}

\author{N. S. Hoang$\dag$\footnotemark[1]
 \quad 	  A. G. Ramm$\dag$\footnotemark[3] \\
\\
\\
$\dag$Mathematics Department, Kansas State University,\\
Manhattan, KS 66506-2602, USA
}

\renewcommand{\thefootnote}{\fnsymbol{footnote}}
\footnotetext[1]{Email: nguyenhs@math.ksu.edu}
\footnotetext[3]{Corresponding author. Email: ramm@math.ksu.edu}

\date{}
\maketitle

\begin{abstract}
\noindent
A version of the Dynamical Systems Gradient Method for solving ill-posed
nonlinear monotone operator equations is studied in this paper.
A  discrepancy principle is proposed and justified.
A numerical experiment was carried out with the new stopping rule.
Numerical experiments show that the proposed stopping rule is efficient.
Equations with monotone operators are of interest in many applications.
\\

{\bf Mathematics Subject Classification.}  47J05, 47J06, 47J35, 65R30

{\bf Keywords.} Dynamical systems method (DSM),
nonlinear operator equations, monotone operators,
discrepancy principle.
\end{abstract}

\section{Introduction}

In this paper we study a version of the Dynamical Systems Method (DSM) 
(see \cite{R499}) for solving the equation
\begin{equation}
\label{aeq1}
F(u)=f,
\end{equation}
 where
$F$ is a nonlinear, twice Fr\'{e}chet differentiable, monotone operator in a 
real Hilbert space $H$, and equation \eqref{aeq1} is assumed solvable, 
possibly nonuniquely. 
Monotonicity means that 
\begin{equation}
\label{ceq2}
\langle F(u)-F(v),u-v\rangle\ge 0,\quad \forall u,v\in H.
\end{equation}
Equations with monotone operators are important in many applications
and were studied extensively, see, for example, 
\cite{L}, \cite{P}, \cite{S},  \cite{V},  and references 
therein. One encounters many technical and 
physical problems with such 
operators in the cases where dissipation of energy occurs. For 
example, in \cite{R129} and \cite{R118}, Chapter 3, pp.156-189,
a wide class of nonlinear dissipative systems is studied, and the basic 
equations of such systems can be reduced to equation \eqref{aeq1}
with monotone operators.
Numerous examples of equations with monotone operators can be found in 
\cite{L} and references mentioned above. In \cite{R504} and \cite{R522}
it is proved that any solvable linear operator equation with a closed, 
densely defined operator in a Hilbert space $H$ can be reduced to an equation 
with a monotone operator and solved by a convergent iterative process.

In this paper, apparently for the 
first time, the 
convergence of the Dynamical Systems Gradient method is proved
under natural assumptions and convergence of a corresponding iterative 
method is established. 
No special assumptions of smallness of the nonlinearity or other special 
properties of the nonlinearity are imposed. No source-type 
assumptions are used. Consequently, our
result is quite general and widely applicable. It is well known, that
without extra assumptions, usually, source-type assumption about the
right-hand side, or some assumption concerning the  smoothness of the 
solution, one cannot get a specific rate of convergence
even for linear ill-posed equations (see, for example, \cite{R499}, 
where one can find a proof of this statement).
On the other hand, such assumptions are often difficult to verify
and often they do not hold. By this reason we do not make such 
assumptions.

The  result of this paper  is useful both because of its many possible 
applications and because of its general nature. Our novel technique 
consists of an application of some new inequalities.  Our main results are
formulated in Theorems \ref{mainthm} and \ref{mainthm2}, and also in several lemmas,
for example, in Lemmas \ref{lemma3}, \ref{lemma4}, \ref{lemma8}, \ref{lemma9}, \ref{lemma11}, 
\ref{lemma12}. Lemmas \ref{lemma3}, \ref{lemma4}, \ref{lemma11}, \ref{lemma12}
may be useful in many other problems.  

In \cite{Tau1} a stationary equation $F(u)=f$  with
a nonlinear  monotone operator
$F$ was studied.  The assumptions A1-A3 on p.197 in  
\cite{Tau1} 
are more restrictive than ours, and the Rule R2 on p.199,
formula (4.1) in  \cite{Tau1} for the choice of the regularization 
parameter
is quite different from our rule and is more difficult to use
it computationally: one has to solve a nonlinear equation 
(equation (4.1) in \cite{Tau1}) in order to find 
the regularization parameter. To use this equation one has to invert 
an ill-conditioned linear operator $A+\alpha I$ for small values of 
$\alpha$.
Assumption A1 in \cite{Tau1}
is not verifiable practically, because the solution $x^\dagger$
is not known. Assumption A3 in \cite{Tau1}  requires $F$ to be constant 
in 
a ball $B_r(x^\dagger)$ if $F'(x^\dagger)=0$. 
Our method does not require these assumptions, 
and, in contrast to equation (4.1) in   \cite{Tau1}, 
it does not require inversion of
ill-conditioned linear operators and solving nonlinear equations for 
finding the regularization parameter. The stopping time is
chosen numerically in our method without extra computational effort
by a discrepancy-type principle formulated and  justified in Theorem \ref{mainthm},
in Section \ref{mainsec}. We give a convergent iterative process for 
stable solution of equation (1.1) and a stopping rule for this process.
 
In \cite{Tau1} 
the "source-type assumption" is made, that is, it is assumed 
that the right-hand side of the equation $F(u)=f$ belongs to the range 
of a suitable operator. This usually allows one to get some convergence 
rate. 
In our paper, as was already mentioned above, such an assumption is not 
used because, on the one hand,
numerically it is difficult to verify such an assumption,
and, on the other hand, such an assumption may be not satisfied in many 
cases, even in linear ill-posed problems, for example, in the case when 
the solution does not have extra smoothness. 

We assume the nonlinearity to be twice locally  Fr\'{e}chet 
differentiable. This assumption, as we mention below, does not restrict
the global growth of the nonlinearity. In many practical and 
theoretical problems the nonlinearities are smooth and given
analytically.
% in many cases. 
In these cases one can calculate $F'$
analytically. This is the case in the example, considered in Section \ref{numsec}. 
This example is a simple model problem for non-linear Wiener-type 
filtering (see \cite{R486}). If one drops the nonlinear cubic term
in the equation $Bu+u^3=f$  of this example, then the resulting equation 
$Bu=f$ does not have integrable solutions, in general, even for 
very smooth $f$, for example, for $f\in 
C^\infty([0,1])$, as shown in \cite{R486}. It is, therefore, of special
interest to solve this equation numerically. 

It is known (see, e.g., \cite{R499}), that the set 
$\mathcal{N}:=\{u:F(u)=f\}$ is closed and convex if $F$ is monotone and 
continuous. 
A closed and convex set in a Hilbert space has a unique minimal-norm element. 
This element in $\mathcal{N}$ we denote by $y$, $F(y)=f$. 
We assume that 
\begin{equation}
\label{ceq3}
\sup_{\|u-u_0\|\le R}\|F^{(j)}(u)\|\le M_j(R),\quad 0\le j\le 2,
\end{equation}
where $u_0\in H$ is an element of $H$, $R>0$ is arbitrary, 
and $f=F(y)$ is not known but
 $f_\delta$, the noisy data, are known, and $\|f_\delta-f\|\le \delta$.
Assumption \eqref{ceq3} simplifies our arguments and
does not restrict the global growth of the nonlinearity.
In \cite{R454} this assumption is weakened to hemicontinuity 
in the problems related to the existence of the global solutions
of the equations, generated by the  DSM. In many applications the 
nonlinearity $F$ is given analytically, and then one can calculate  
$F'(u)$ analytically. 
  
If $F'(u)$ is not boundedly invertible then 
 solving equation \eqref{aeq1} for $u$ given noisy data $f_\delta$ is often (but not always) an ill-posed problem.
When $F$ is a linear bounded operator many methods for stable solving of \eqref{aeq1} were proposed (see \cite{R526}, \cite{I}--\cite{R499} and references therein). 
However, when $F$ is nonlinear then the theory is
less complete.

DSM consists of finding a nonlinear map $\Phi(t,u)$ such that the Cauchy problem
$$
\dot{u}=\Phi(t,u),\qquad u(0)=u_0,
$$
has a unique solution for all $t\ge0$, there exists $\lim_{t\to\infty}u(t):=u(\infty)$,
and $F(u(\infty))=f$,
\begin{equation}
\label{23eq1}
\exists !\, \,u(t)\quad \forall t\ge 0;\qquad \exists u(\infty);\qquad F(u(\infty))=f.
\end{equation}
Various choices of $\Phi$ were proposed in \cite{R499} for \eqref{23eq1} to hold.
Each such choice yields a version of the DSM.

The DSM for solving equation \eqref{aeq1} was extensively studied in 
\cite{R499}--\cite{491}. 
In \cite{R499}, the following version of the DSM was investigated
for monotone operators $F$:
\begin{equation}
\label{hichehic}
\dot{u}_\delta = -\big{(}F'(u_\delta) + a(t)I\big{)}^{-1}\big{(}F(u_\delta)+a(t)u_\delta - f_\delta\big{)},
\quad u_\delta(0)=u_0.
\end{equation}
Here $I$ denotes the identity operator in $H$. 
The convergence of this method was justified with some {\it a priori} 
choice of stopping rule.

In \cite{Tau} a continuous gradient method for solving equation \eqref{aeq1}
was studied. A stopping rule of discrepancy type was introduced and justified under the assumption 
that $F$ satisfies the following condition:
\begin{equation}
\label{taueq1}
\|F(\tilde{x}) - F(x) - F'(x)(\tilde{x} - x) \|= \eta \|F(x) - F(\tilde{x})\|, \qquad \eta < 1,
\end{equation}
for all $x,\, \tilde{x}$ in some ball  $B(x_0,R)\subset H$. This very restrictive assumption
is not satisfied even for monotone operators. Indeed, if $F'(x)=0$ for some
$x\in B(x_0)$ then \eqref{taueq1} implies $F(x)=f$ for all $x\in 
B(x_0,R)$, 
provided that
$B(x_0,R)$ contains a solution of \eqref{aeq1}. 
%Even such $\eta$ satisfying \eqref{taueq1}
%does exist for some $F$, it is very difficult to determine $\eta$. Therefore, the stopping rule
%in \eqref{taueq1} 

In this paper we consider a gradient-type version of the DSM for solving equation \eqref{aeq1}:
\begin{equation}
\label{aeq2}
\dot{u}_\delta = -\big{(}F'(u_\delta)^* + a(t)I\big{)}\big{(}F(u_\delta)+a(t)u_\delta - f_\delta\big{)},
\quad u_\delta(0)=u_0,
\end{equation}
where $F$ is a monotone operator and $A^*$ denotes the adjoint to a linear operator $A$.   
If $F$ is monotone then $F'(\cdot):=A\ge0$. If a bounded linear operator $A$
is defined on all of the complex Hilbert space $H$ and $A\ge0$,
i.e., $\langle Au,u\rangle \ge0,\, \forall u\in H$, then $A=A^*$,
so $A$ is selfadjoint. In a real Hilbert space $H$ a bounded linear operator defined on all of $H$
and satisfying the inequality $\langle Au,u\rangle \ge0, \, \forall u\in H$ is not necessary selfadjoint.
Example: $H=\mathbb{R}^2,\, A= \begin{pmatrix}2&1\\0&2 \end{pmatrix}$, 
$\langle Au,u\rangle = 2u_1^2 + u_1u_2 +u_2^2\ge 0$, but 
$A^*= \begin{pmatrix}2&0\\1&2 \end{pmatrix}\not=A$.

The convergence of the method \eqref{aeq2} for any initial value $u_0$ is proved
for a stopping rule based on a discrepancy principle. 
This {\it a posteriori} choice of stopping time 
$t_\delta$ is justified
provided that $a(t)$ is suitably chosen.

The advantage of method \eqref{aeq2}, a modified version of the gradient method, over the Gauss-Newton method and
the version \eqref{hichehic} of the DSM is the following: no inversion of matrices is needed in \eqref{aeq2}.
Although the convergence rate of the DSM \eqref{aeq2} maybe slower than that of the DSM 
\eqref{hichehic}, the DSM \eqref{aeq2} might be faster than the DSM 
\eqref{hichehic} for large-scale systems due to its lower computation cost at each iteration.

In this paper we investigate a stopping rule based on a discrepancy principle (DP) for the DSM \eqref{aeq2}. 
The main results of this paper are Theorem~\ref{mainthm} and Theorem~\ref{mainthm2} in which
a DP is formulated,  
the existence of a stopping time $t_\delta$ is proved, and
the convergence of the DSM with the proposed DP is justified under some natural assumptions.

\section{Auxiliary results}

The inner product in $H$ is denoted $\langle u,v\rangle$. Let us consider the following equation
\begin{equation}
\label{2eq2}
F(V_\delta)+aV_\delta-f_\delta = 0,\qquad a>0,
\end{equation}
where $a=const$. It is known (see, e.g., \cite{R499}, \cite{Z}) that equation \eqref{2eq2} 
with monotone continuous operator $F$ has a unique solution for any $f_\delta\in H$. 

Let us recall the following result from \cite{R499}:
\begin{lem}
\label{rejectedlem}
Assume that equation \eqref{aeq1} is solvable, $y$ is its minimal-norm solution, assumptions
\eqref{ceq2} holds, and $F$ is continuous. Then
$$
\lim_{a\to 0} \|V_{a}-y\| = 0,
$$
where $V_a$ solves \eqref{2eq2} with $\delta=0$.
\end{lem}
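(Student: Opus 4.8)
The plan is to derive an \emph{a priori} bound on $V_a$, extract a weak limit along an arbitrary sequence $a_n\to 0$, identify this limit as a solution via a monotonicity (Minty-type) argument, and finally upgrade weak convergence to strong convergence and pin the limit down to be $y$ using the minimal-norm property.

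First I would subtract the relations $F(V_a)+aV_a=f$ and $F(y)=f$ to get $F(V_a)-F(y)=-aV_a$, and pair this with $V_a-y$. Monotonicity \eqref{ceq2} gives $\langle F(V_a)-F(y),V_a-y\rangle\ge 0$, hence $\langle V_a,V_a-y\rangle\le 0$, which yields $\|V_a\|\le\|y\|$ for every $a>0$. In particular the family $\{V_a\}$ is bounded, and from $F(V_a)-f=-aV_a$ one also reads off $\|F(V_a)-f\|=a\|V_a\|\le a\|y\|\to 0$, so $F(V_a)\to f$ strongly as $a\to 0$.

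Next, fix any sequence $a_n\to 0$. By boundedness, a subsequence (not relabeled) satisfies $V_{a_n}\rightharpoonup v$ weakly for some $v\in H$. To show $F(v)=f$ I would use the Minty trick: for arbitrary $w\in H$, monotonicity gives $\langle F(V_{a_n})-F(w),V_{a_n}-w\rangle\ge 0$. Passing to the limit is the delicate step, because it requires the convergence $\langle F(V_{a_n}),V_{a_n}\rangle\to\langle f,v\rangle$; this is legitimate since $F(V_{a_n})\to f$ strongly while $V_{a_n}\rightharpoonup v$ weakly, so the pairing of a strongly convergent with a weakly convergent sequence tends to the pairing of the limits. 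The remaining inner products pass to the limit by weak convergence, giving $\langle f-F(w),v-w\rangle\ge 0$ for all $w\in H$. Choosing $w=v-th$ with $t>0$ and $h\in H$ arbitrary, dividing by $t$, and letting $t\to 0^+$ using continuity of $F$ yields $\langle f-F(v),h\rangle\ge 0$ for all $h$, whence $F(v)=f$, i.e. $v\in\mathcal{N}$.

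Finally I would identify $v$ with $y$ and obtain strong convergence. Weak lower semicontinuity of the norm gives $\|v\|\le\liminf_n\|V_{a_n}\|\le\|y\|$, while $v\in\mathcal{N}$ together with the definition of $y$ as the unique minimal-norm element of the closed convex set $\mathcal{N}$ forces $\|v\|\ge\|y\|$; hence $\|v\|=\|y\|$ and, by uniqueness of the minimal-norm element, $v=y$. Moreover $\|V_{a_n}\|\to\|y\|=\|v\|$ combined with $V_{a_n}\rightharpoonup v$ implies $V_{a_n}\to v=y$ strongly in the Hilbert space $H$. Since every sequence $a_n\to 0$ admits a subsequence converging strongly to the same limit $y$, the whole family converges, so $\lim_{a\to 0}\|V_a-y\|=0$. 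I expect the main obstacle to be the limit passage in the Minty inequality, precisely the justification of $\langle F(V_{a_n}),V_{a_n}\rangle\to\langle f,v\rangle$ through the strong--weak pairing; the remaining steps are routine.
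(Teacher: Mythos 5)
Your proof is correct: the bound $\|V_a\|\le\|y\|$, the strong convergence $F(V_a)\to f$, the Minty argument to identify the weak limit as a solution, the minimal-norm property to pin it down to $y$, and the norm-plus-weak-convergence upgrade to strong convergence are all carried out properly (continuity of $F$ more than suffices for the hemicontinuity needed in the Minty step). The paper itself does not prove this lemma --- it recalls it from \cite{R499} --- and your argument is essentially the standard proof given there, so there is nothing to add.
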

Of course, under our assumption \eqref{ceq3}, $F$ is continuous.

\begin{lem}
\label{lem0}
If \eqref{ceq2} holds and $F$ is continuous, then
$\|V_\delta\|=O(\frac{1}{a})$ as $a\to\infty$, and
\begin{equation}
\label{4eq2}
\lim_{a\to\infty}\|F(V_\delta)-f_\delta\|=\|F(0)-f_\delta\|.
\end{equation}
\end{lem}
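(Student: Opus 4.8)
The plan is to exploit monotonicity by pairing the defining equation \eqref{2eq2} with the test element $V_\delta$ itself, and using the trivial reference point $v=0$ in the monotonicity inequality \eqref{ceq2}. First I would take the inner product of \eqref{2eq2} with $V_\delta$, obtaining
$$
\langle F(V_\delta),V_\delta\rangle + a\|V_\delta\|^2 = \langle f_\delta,V_\delta\rangle.
$$
Next I would apply \eqref{ceq2} with $u=V_\delta$ and $v=0$, which gives $\langle F(V_\delta),V_\delta\rangle \ge \langle F(0),V_\delta\rangle$. Substituting this lower bound into the identity above yields
$$
a\|V_\delta\|^2 \le \langle f_\delta - F(0), V_\delta\rangle \le \|f_\delta - F(0)\|\,\|V_\delta\|,
$$
where the last step is Cauchy--Schwarz. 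Dividing by $\|V_\delta\|$ (the case $V_\delta=0$ being trivial) produces the key a priori estimate
$$
\|V_\delta\| \le \frac{\|f_\delta - F(0)\|}{a}.
$$
Since $\|f_\delta - F(0)\|$ is a fixed finite quantity independent of $a$, this is exactly the claimed bound $\|V_\delta\| = O(1/a)$ as $a\to\infty$, and it establishes the first assertion of the lemma.

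For the second assertion I would first observe that the estimate just derived forces $\|V_\delta\|\to 0$, i.e. $V_\delta\to 0$ in $H$, as $a\to\infty$. Because $F$ is assumed continuous, it follows that $F(V_\delta)\to F(0)$, hence $F(V_\delta)-f_\delta \to F(0)-f_\delta$ in $H$. Continuity of the norm then gives $\|F(V_\delta)-f_\delta\| \to \|F(0)-f_\delta\|$, which is precisely \eqref{4eq2}.

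I do not anticipate a genuine obstacle here: the argument is short and self-contained once the right pairing is chosen. The one point requiring care is the choice of the comparison point $v=0$ in the monotonicity hypothesis, which is what converts the a priori energy identity into a usable upper bound on $\|V_\delta\|$; everything afterward is routine Cauchy--Schwarz and continuity. I would only need to note that $F(0)$ is well defined and $\|f_\delta-F(0)\|<\infty$, so that the right-hand side of the estimate genuinely tends to $0$ as $a\to\infty$.
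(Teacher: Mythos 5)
Your proof is correct and follows essentially the same route as the paper: both pair equation \eqref{2eq2} with $V_\delta$, invoke monotonicity at the pair $(V_\delta,0)$ to obtain $a\|V_\delta\|^2\le\|f_\delta-F(0)\|\,\|V_\delta\|$, and then deduce \eqref{4eq2} from the continuity of $F$. The only difference is cosmetic (the paper first rewrites the equation by inserting $\pm F(0)$ before multiplying by $V_\delta$), so there is nothing to add.
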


\begin{proof}
%First, we claim that $aV_\delta$ is bounded for all $a>0$. Indeed, 
Rewrite \eqref{2eq2} as
$$
F(V_\delta) - F(0) + aV_\delta + F(0)-f_\delta = 0.
$$
Multiply this equation
by $V_\delta$, use inequality $\langle F(V_\delta)-F(0),V_\delta-0\rangle \ge 0$ and get:
$$
a\|V_\delta\|^2\le
%\langle aV_\delta + F(V_\delta)-F(0), V_\delta\rangle = 
%\langle f_\delta-F(0), V_\delta\rangle \le 
\|f_\delta-F(0)\|\|V_\delta\|.
$$
Therefore, %$\|V_\delta\|\le {\|f_\delta-F(0)\|}{a}=O{\frac 1 a}$,
%so 
$\|V_\delta\|=O(\frac{1}{a})$. This and the continuity of $F$ imply \eqref{4eq2}.
\end{proof}

Let $a=a(t)$ be strictly monotonically decaying continuous
positive function on $[0,\infty)$, $0<a(t)\searrow 0$, and assume $a\in C^1[0,\infty)$. 
These assumptions hold throughout the paper and often are not repeated. 
Then the solution $V_\delta$ of \eqref{2eq2} is a function of $t$, $V_\delta=V_\delta(t)$.
From the triangle inequality one gets:
$$
\|F(V_\delta(0))-f_\delta\|\ge\|F(0) - f_\delta\| -\|F(V_\delta(0))-F(0)\|.
$$
%One has 
From Lemma~\ref{lem0} it follows that for large $a(0)$ one has:
$$
\|F(V_\delta(0))-F(0)\|\le M_1\|V_\delta(0)\|=O\bigg{(}\frac{1}{a(0)}\bigg{)}.
$$
Therefore, 
if $\|F(0)-f_\delta\|>C\delta$, then $\|F(V_\delta(0))-f_\delta\|\ge (C-\epsilon)\delta$, 
where $\epsilon>0$ is sufficiently small and $a(0)>0$ is sufficiently large. 

Below the words decreasing and increasing mean strictly decreasing and strictly increasing.

\begin{lem}
\label{lemma3}
Assume $\|F(0)-f_\delta\|>0$.
Let $0<a(t)\searrow 0$, and $F$ be monotone.
Denote 
$$
\psi(t) :=\|V_\delta(t)\|,\qquad \phi(t):=a(t)\psi(t)=\|F(V_\delta(t)) - f_\delta\|,
$$ 
where $V_\delta(t)$ solves \eqref{2eq2} with $a=a(t)$. 
Then
$\phi(t)$ is decreasing, and $\psi(t)$ is increasing.
\end{lem}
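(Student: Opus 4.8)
The plan is to avoid working with $t$ directly and instead compare the solutions of \eqref{2eq2} for two parameter values, since $t\mapsto a(t)$ is a continuous decreasing bijection onto its range. So I fix $t_1<t_2$, set $a_1:=a(t_1)>a_2:=a(t_2)>0$ and $V_i:=V_\delta(t_i)$, so that $F(V_i)+a_iV_i=f_\delta$. A preliminary observation is that $\|F(0)-f_\delta\|>0$ forces $V_i\neq 0$: if $V_\delta=0$ then \eqref{2eq2} would give $F(0)=f_\delta$. Hence $\psi(t_i)=\|V_i\|>0$ and $\phi(t_i)=a_i\|V_i\|>0$, which I will use for the sign arguments.

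The engine is monotonicity. Subtracting the two equations gives $F(V_1)-F(V_2)+a_1V_1-a_2V_2=0$; pairing with $V_1-V_2$ and discarding the nonnegative term $\langle F(V_1)-F(V_2),V_1-V_2\rangle\ge0$ (by \eqref{ceq2}) yields $\langle a_1V_1-a_2V_2,\,V_1-V_2\rangle\le 0$. Expanding and applying Cauchy--Schwarz $\langle V_1,V_2\rangle\le\|V_1\|\,\|V_2\|$, I obtain, with $\psi_i:=\|V_i\|$,
\begin{equation*}
a_1\psi_1^2+a_2\psi_2^2\le(a_1+a_2)\psi_1\psi_2 ,
\end{equation*}
which factors as $(a_1\psi_1-a_2\psi_2)(\psi_1-\psi_2)\le0$, i.e. $(\phi(t_1)-\phi(t_2))(\psi(t_1)-\psi(t_2))\le 0$. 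Thus $\psi$ and $\phi$ are forced to move in opposite directions.

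To pin the direction down, suppose $\psi_1>\psi_2$. Then, since $a_1>a_2>0$ and $\psi_i>0$, one has $a_1\psi_1>a_2\psi_2$, so both factors above are positive, contradicting the product being $\le0$. Hence $\psi_1\le\psi_2$, and then the displayed inequality forces $\phi_1\ge\phi_2$. To upgrade $\psi_1\le\psi_2$ to a strict inequality, suppose $\psi_1=\psi_2$; then the whole chain is an equality, so Cauchy--Schwarz is tight with $\|V_1\|=\|V_2\|$, whence $V_1=V_2$; subtracting the two equations then gives $(a_1-a_2)V_1=0$, so $V_1=0$, contradicting $V_1\neq0$. Therefore $\psi$ is \emph{strictly} increasing.

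The main obstacle is the strict decrease of $\phi$: the comparison inequality only yields $\phi_1\ge\phi_2$, and equality can be approached exactly when $V_1,V_2$ become proportional and $F'$ degenerates. I expect to resolve this by passing to a differentiated identity, which is available since the standing hypotheses ($a\in C^1$, $F$ twice differentiable, and $F'(V_\delta)+aI$ coercive, hence invertible) make $V_\delta(t)$ differentiable by the implicit function theorem. Differentiating \eqref{2eq2} in $t$ gives $(F'(V_\delta)+aI)\dot V_\delta=-\dot a\,V_\delta$, and with $w:=(F'(V_\delta)+aI)^{-1}V_\delta$ a short computation using $F(V_\delta)-f_\delta=-aV_\delta$ yields
\begin{equation*}
\tfrac12\tfrac{d}{dt}\phi^2=a\dot a\big(\|F'(V_\delta)w\|^2+a\langle F'(V_\delta)w,\,w\rangle\big).
\end{equation*}
Because $a>0$, $\dot a<0$, and $F'(V_\delta)\ge0$ (so $\langle F'(V_\delta)w,w\rangle\ge0$), the right-hand side is $\le0$, recovering that $\phi$ is nonincreasing, with strict decrease precisely where $F'(V_\delta)w\neq0$. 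The delicate point I would have to argue carefully is that this degenerate locus does not form an interval under the operating assumptions, so that $\phi$ is in fact strictly decreasing.
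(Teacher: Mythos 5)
Your core argument is the paper's own: the chain ``monotonicity $\Rightarrow\ \langle a_1V_1-a_2V_2,\,V_1-V_2\rangle\le 0\ \Rightarrow$ (Cauchy--Schwarz) $(\phi(t_1)-\phi(t_2))(\psi(t_1)-\psi(t_2))\le 0$'' is exactly the paper's \eqref{1eq3}--\eqref{2eq6}, and your handling of the borderline case $\psi(t_1)=\psi(t_2)$ (equality in Cauchy--Schwarz forces $V_1=V_2$, whence $(a_1-a_2)V_1=0$ contradicts $V_1\ne 0$) is the paper's argument in mirror image. So the strict increase of $\psi$ and the inequality $\phi(t_1)\ge\phi(t_2)$ are obtained in essentially the same way in both proofs.

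The only place you go beyond the paper is in trying to upgrade $\phi(t_1)\ge\phi(t_2)$ to a strict inequality, and your caution there is justified: the ``delicate point'' you flag cannot be closed. Note that the paper never closes it either --- its proof likewise yields only $\phi(t_1)\ge\phi(t_2)$ and then asserts strict decrease without further argument. In fact strict decrease fails under the lemma's stated hypotheses: take $F\equiv g$ constant (monotone and smooth) with $g\ne f_\delta$; then \eqref{2eq2} gives $V_\delta(t)=a(t)^{-1}(f_\delta-g)$, so $\psi$ is strictly increasing while $\phi(t)\equiv\|g-f_\delta\|$ is constant. Your differential identity is computed correctly and correctly locates the obstruction (the set where $F'(V_\delta)w=0$), but that degenerate set can be all of $[0,\infty)$, as the example shows. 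So you should stop at ``$\phi$ is non-increasing'': that is all either argument proves, and the strict version would need extra hypotheses on $F$ (it is also what the uniqueness assertion in Lemma~\ref{lemma4} silently relies on).
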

 
\begin{proof}
Since $\|F(0)-f_\delta \|>0$, one has $\psi(t)\not=0,\, \forall t\ge 0$. 
Indeed, if $\psi(t)\big{|}_{t=\tau}=0$, then $V_\delta(\tau)=0$, and equation \eqref{2eq2} implies
$\|F(0)-f_\delta\|=0$, which is a contradiction. 
Note that $\phi(t)=a(t)\|V_\delta(t)\|$. One has
\begin{equation}
\label{1eq3}
\begin{split}
0&\le \langle F(V_\delta(t_1))-F(V_\delta(t_2)),V_\delta(t_1)-V_\delta(t_2)\rangle\\
&= \langle -a(t_1)V_\delta(t_1)+a(t_2)V_\delta(t_2),V_\delta(t_1)-V_\delta(t_2)\rangle\\
&= (a(t_1)+a(t_2))\langle V_\delta(t_1),V_\delta(t_2) \rangle -a(t_1)\|V_\delta(t_1)\|^2 - a(t_2)\|V_\delta(t_2)\|^2.
\end{split}
\end{equation}
Thus,
\begin{equation}
\label{2eq6}
\begin{split}
%0&\le (a(t_1)+a(t_2))\langle V_\delta(t_1),V_\delta(t_2) \rangle -a(t_1)\|V_\delta(t_1)\|^2 - a(t_2)\|V_\delta(t_2)\|^2\\
0& \le  (a(t_1)+a(t_2))\|V_\delta(t_1)\|\|V_\delta(t_2) \| - a(t_1)\|V_\delta(t_1)\|^2 - a(t_2)\|V_\delta(t_2)\|^2\\
& = (a(t_1) \|V_\delta(t_1)\| - a(t_2) \|V_\delta(t_2)\|)(\|V_\delta(t_2)\|-\|V_\delta(t_1)\|)\\
& = (\phi(t_1)-\phi(t_2))(\psi(t_2) - \psi(t_1)).
\end{split}
\end{equation}

If $\psi(t_2) > \psi(t_1)$ then \eqref{2eq6} implies $\phi(t_1)\ge \phi(t_2)$, so
$$
a(t_1)\psi(t_1)\ge a(t_2)\psi(t_2)> a(t_2)\psi(t_1).
$$
Thus, if $\psi(t_2)> \psi(t_1)$ then $a(t_2)< a(t_1)$ and, therefore, $t_2> t_1$,
because $a(t)$ is strictly decreasing. 

Similarly, if $\psi(t_2)< \psi(t_1)$ then $\phi(t_1)\le \phi(t_2)$. This implies $a(t_2)> a(t_1)$, so $t_2< t_1$.

Suppose $\psi(t_1)=\psi(t_2)$, i.e., $\|V_\delta(t_1)\|=\|V_\delta(t_2)\|$. From \eqref{1eq3}, one has
$$
\|V_\delta(t_1)\|^2\le \langle V_\delta(t_1),V_\delta(t_2) \rangle \le \|V_\delta(t_1)\|\|V_\delta(t_2)\| = \|V_\delta(t_1)\|^2.
$$
This implies $V_\delta(t_1)=V_\delta(t_2)$, and then equation \eqref{2eq2} implies $a(t_1)=a(t_2)$. 
Hence, $t_1=t_2$, because $a(t)$ is strictly decreasing.

Therefore $\phi(t)$ is decreasing
and $\psi(t)$ is increasing.
\end{proof}

\begin{lem}
\label{lemma4}
Suppose that $\|F(0)-f_\delta\|> C\delta$, \,$C>1$, and $a(0)$ is sufficiently large. 
Then, there exists a unique $t_1>0$ such that $\|F(V_\delta(t_1))-f_\delta\|=C\delta$.
\end{lem}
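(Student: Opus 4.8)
The plan is to treat $\phi(t) = \|F(V_\delta(t)) - f_\delta\| = a(t)\|V_\delta(t)\|$ as a continuous, strictly decreasing function on $[0,\infty)$ and to pin down its behavior at the two ends, so that the conclusion follows from the intermediate value theorem together with strict monotonicity.

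First I would record the structural facts about $\phi$. Strict monotonicity (that $\phi$ decreases) is exactly the content of Lemma \ref{lemma3}, so nothing new is needed there. For continuity in $t$, I would subtract the defining equations \eqref{2eq2} at times $t$ and $s$, pair the difference with $V_\delta(t) - V_\delta(s)$, and use monotonicity \eqref{ceq2} to obtain $\|V_\delta(t) - V_\delta(s)\| \le \frac{|a(t) - a(s)|}{a(t)}\|V_\delta(s)\|$; since $a\in C^1[0,\infty)$ is continuous, letting $s\to t$ shows that $V_\delta(\cdot)$, and hence $\phi$, is continuous.

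Next I would evaluate the endpoints. At $t=0$: since $\|F(0)-f_\delta\| > C\delta$ strictly, Lemma \ref{lem0} gives $\phi(0) = \|F(V_\delta(0)) - f_\delta\| \to \|F(0)-f_\delta\|$ as $a(0)\to\infty$, so for $a(0)$ sufficiently large one has $\phi(0) > C\delta$. For the behavior as $t\to\infty$ (equivalently $a(t)\searrow 0$), I would compare $V_\delta(t)$ with the noise-free solution $V_a$ of \eqref{2eq2} with $\delta=0$: subtracting the two equations, pairing with $V_\delta - V_a$, and using monotonicity yields $\|V_\delta - V_a\| \le \delta/a$. Hence $\phi = a\|V_\delta\| \le a\|V_\delta - V_a\| + a\|V_a\| \le \delta + a\|V_a\|$. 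By Lemma \ref{rejectedlem}, $V_a\to y$ as $a\to 0$, so $a\|V_a\|\to 0$, and therefore $\lim_{t\to\infty}\phi(t) \le \delta < C\delta$, the last inequality because $C>1$ (the limit exists since $\phi$ is decreasing and bounded below by $0$).

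Finally, with $\phi$ continuous, $\phi(0) > C\delta$, and $\lim_{t\to\infty}\phi(t) < C\delta$, the intermediate value theorem produces a $t_1>0$ with $\phi(t_1) = C\delta$, and strict monotonicity of $\phi$ makes this $t_1$ unique. I expect the main obstacle to be the $t\to\infty$ estimate: one must verify that the discrepancy actually drops below $C\delta$, and the clean route is the comparison bound $\|V_\delta - V_a\| \le \delta/a$ combined with the convergence $V_a\to y$ from Lemma \ref{rejectedlem}, which is what controls the residual term $a\|V_a\|$.
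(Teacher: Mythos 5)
Your proof is correct, and the overall skeleton (strict monotonicity of $\phi$ from Lemma~\ref{lemma3} for uniqueness, continuity of $t\mapsto V_\delta(t)$, endpoint values, intermediate value theorem) matches the paper's. The one step where you genuinely diverge is the crucial estimate of $\phi(t)$ as $t\to\infty$. The paper pairs the defining equation \eqref{2eq2} with $F(V_\delta)-f_\delta$ and with $V_\delta-y$, producing the quadratic inequality \eqref{3eq4} and hence $\overline{\lim}_{t\to\infty}(1-\epsilon)\|F(V_\delta)-f_\delta\|^2\le\delta^2$. You instead compare $V_\delta$ with the noise-free regularized solution $V$, using $\|V_\delta-V\|\le\delta/a$ together with $\|V\|\le\|y\|$ (these are exactly \eqref{rejected11} and \eqref{rejected12} of Remark~\ref{rem3}, though the paper states them only after Lemma~\ref{lemma4}), to get $\phi(t)\le\delta+a(t)\|V\|\to\delta<C\delta$. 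Your route is shorter, avoids the $\epsilon$-bookkeeping, and gives the slightly sharper limit $\le\delta$ rather than $\le\delta/\sqrt{1-\epsilon}$; note you do not even need the full strength of Lemma~\ref{rejectedlem} here, since the uniform bound $\|V\|\le\|y\|$ already forces $a\|V\|\to0$. You also supply two details the paper leaves implicit: an explicit continuity estimate $\|V_\delta(t)-V_\delta(s)\|\le\frac{|a(t)-a(s)|}{a(t)}\|V_\delta(s)\|$, and the verification that $\phi(0)>C\delta$ for $a(0)$ large (the paper relies on the discussion following Lemma~\ref{lem0} for this). Both additions are sound and strengthen the exposition.
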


\begin{proof}
The uniqueness of $t_1$ follows from Lemma~\ref{lemma3} because
$\|F(V_\delta(t))-f_\delta\|=\phi(t)$, and $\phi$ is decreasing. 
We have $F(y)=f$, and
\begin{align*}
0&=\langle F(V_\delta)+aV_\delta-f_\delta, F(V_\delta)-f_\delta \rangle\\
&=\|F(V_\delta)-f_\delta\|^2+a\langle V_\delta-y, F(V_\delta)-f_\delta \rangle + a\langle y, F(V_\delta)-f_\delta \rangle\\
&=\|F(V_\delta)-f_\delta\|^2+a\langle V_\delta-y, F(V_\delta)-F(y) \rangle + a\langle V_\delta-y, f-f_\delta \rangle 
+ a\langle y, F(V_\delta)-f_\delta \rangle\\
&\ge\|F(V_\delta)-f_\delta\|^2 + a\langle V_\delta-y, f-f_\delta \rangle + a\langle y, F(V_\delta)-f_\delta \rangle.
\end{align*}
Here the inequality $\langle V_\delta-y, F(V_\delta)-F(y) \rangle\ge0$ was used. 
Therefore
\begin{equation}
\label{1eq1}
\begin{split}
\|F(V_\delta)-f_\delta\|^2 &\le -a\langle V_\delta-y, f-f_\delta \rangle - a\langle y, F(V_\delta)-f_\delta \rangle\\
&\le a\|V_\delta-y\| \|f-f_\delta\| + a\|y\| \|F(V_\delta)-f_\delta\|\\
&\le  a\delta \|V_\delta-y\|  + a\|y\| \|F(V_\delta)-f_\delta\|.
\end{split}
\end{equation}
On the other hand, we have
\begin{align*}
0&= \langle F(V_\delta)-F(y) + aV_\delta +f -f_\delta, V_\delta-y\rangle\\
&=\langle F(V_\delta)-F(y),V_\delta-y\rangle + a\| V_\delta-y\| ^2 + a\langle y, V_\delta-y\rangle + \langle f-f_\delta, V_\delta-y\rangle\\
&\ge  a\| V_\delta-y\| ^2 + a\langle y, V_\delta-y\rangle + \langle f-f_\delta, V_\delta-y\rangle,
\end{align*}
where the inequality $\langle V_\delta-y, F(V_\delta)-F(y) \rangle\ge0$ was used. Therefore,
$$
a\|V_\delta-y\|^2 \le a\|y\|\|V_\delta-y\|+\delta\|V_\delta-y\|.
$$
This implies
\begin{equation}
\label{1eq2}
a\|V_\delta-y\|\le a\|y\|+\delta.
\end{equation}
From \eqref{1eq1} and \eqref{1eq2}, and an elementary inequality $ab\le \epsilon a^2+\frac{b^2}{4\epsilon},\,\forall\epsilon>0$, one gets:
\begin{equation}
\label{3eq4}
\begin{split}
\|F(V_\delta)-f_\delta\|^2&\le \delta^2 + a\|y\|\delta + a\|y\| \|F(V_\delta)-f_\delta\|\\
&\le \delta^2 + a\|y\|\delta + \epsilon \|F(V_\delta)-f_\delta\|^2 + 
\frac{1}{4\epsilon}a^2\|y\|^2,
\end{split}
\end{equation}
where $\epsilon>0$ is fixed, independent of $t$, and can be chosen 
arbitrary small. 
Let $t\to\infty$ and $a=a(t)\searrow 0$. Then \eqref{3eq4} implies
$$
\overline{\lim}_{t\to\infty}(1-\epsilon)\|F(V_\delta)-f_\delta\|^2\le 
\delta^2.
$$
This, the continuity of $F$, the continuity of $V_\delta(t)$ on $[0,\infty)$, and the assumption $\|F(0)-f_\delta\|>C\delta$ 
imply that equation $\|F(V_\delta(t))-f_\delta\|=C\delta$ must have a solution $t_1>0$.
The uniqueness of this solution has already established. 
\end{proof}

\begin{rem}
\label{rem2.9}
{\rm From the proof of Lemma~\ref{lemma4} one obtains the following claim: 

If $t_n\nearrow \infty$ then there exists a unique $n_1>0$ such that
$$
\|F(V_{n_1+1})-f_\delta\| \le C\delta < \|F(V_{n_1})-f_\delta\|,\qquad V_n:=V_\delta(t_n).
$$

}
\end{rem}

\begin{rem}
\label{remmoi}
{\rm
From Lemma~\ref{lem0} and Lemma~\ref{lemma3} one concludes that
$$
a_n\|V_n\|=\|F(V_n)-f_\delta\| \le \|F(0)-f_\delta\|,\qquad a_n:=a(t_n),\quad\forall n\ge 0.
$$
}
\end{rem}

\begin{rem}
\label{rem3}
{\rm 
Let $V:=V_\delta(t)|_{\delta=0}$, so 
$$
F(V)+a(t)V-f=0.
$$ 
Let $y$ be the minimal-norm solution to equation \eqref{aeq1}. 
We claim that
\begin{equation}
\label{rejected11}
\|V_{\delta}-V\|\le \frac{\delta}{a}.
\end{equation}
Indeed, from \eqref{2eq2} one gets
$$
F(V_{\delta}) - F(V) + a (V_{\delta}-V)=f- f_\delta.
$$
Multiply this equality with $(V_{\delta}-V)$ and use the monotonicity of $F$ to get
\begin{align*}
a \|V_{\delta}-V\|^2\le \delta \|V_{\delta}-V\|.
\end{align*}
This implies \eqref{rejected11}. 
Similarly, multiplying the equation
$$
F(V) + a V -F(y)=0,
$$
by $V-y$ one derives the inequality:
\begin{equation}
\label{rejected12}
\|V\| \le \|y\|.
\end{equation}
Similar arguments one can find in \cite{R499}. 

From \eqref{rejected11} and \eqref{rejected12}, one gets the following estimate:
\begin{equation}
\label{2eq1}
\|V_\delta\|\le \|V\|+\frac{\delta}{a}\le \|y\|+\frac{\delta}{a}.
\end{equation}
}
\end{rem}

\begin{lem}
\label{lemma8}
Suppose $a(t) = \frac{d}{(c+t)^b}$, $\varphi(t)=\int_0^t \frac{a^2(s)}{2}ds$
where $b\in (0,\frac{1}{4}]$, $d$ and $c$ are positive constants. Then 
\begin{equation}
\label{auxi1}
\frac{d^2}{2}\bigg{(}1-\frac{2b}{c^\theta d^2} \bigg{)}\int_0^t \frac{e^{\varphi(s)}}{(s+c)^{3b}} ds 
< \frac{e^{\varphi(t)}}{(c+t)^b},\qquad \forall t>0,\quad \theta=1-2b>0.
\end{equation}
\end{lem}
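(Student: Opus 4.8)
The plan is to recognize the right-hand side as an antiderivative-type quantity and thereby reduce the claimed integral inequality to a pointwise differential inequality. Set
$$G(t):=\frac{e^{\varphi(t)}}{(c+t)^b},\qquad h(t):=\frac{e^{\varphi(t)}}{(c+t)^{3b}},$$
so that $h$ is exactly the integrand on the left-hand side and $G$ is the quantity on the right. First I would record two facts about $\varphi$. Since $a^2(s)=d^2(c+s)^{-2b}$ and $2b<1$, one integrates to obtain $\varphi(t)=\frac{d^2}{2\theta}\bigl((c+t)^\theta-c^\theta\bigr)$ with $\theta=1-2b>0$; in particular the integral defining $\varphi$ converges, $\varphi(0)=0$, and hence $G(0)=c^{-b}>0$. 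This positivity of $G(0)$ is the one place the strictness of the final inequality will come from, so I flag it for use at the end.

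The key step is to differentiate $G$ and express $G'$ in terms of $h$. Using $\varphi'(t)=\tfrac12 a^2(t)=\frac{d^2}{2(c+t)^{2b}}$, a direct calculation gives
$$G'(t)=e^{\varphi(t)}\left[\frac{\varphi'(t)}{(c+t)^b}-\frac{b}{(c+t)^{b+1}}\right]=h(t)\left[\frac{d^2}{2}-\frac{b}{(c+t)^\theta}\right],$$
where in the last equality I used $\frac{e^{\varphi(t)}}{(c+t)^{b+1}}=\frac{h(t)}{(c+t)^{\theta}}$, which holds because the exponent difference is $(b+1)-3b=1-2b=\theta$. Thus $G'$ factors as the integrand $h$ times a bracket, and the bracket is increasing in $t$ because $(c+t)^\theta$ is increasing (as $\theta>0$).

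Next I would bound the bracket below by its value at the left endpoint. Since $(c+t)^\theta\ge c^\theta$ for all $t\ge0$, we have $\frac{b}{(c+t)^\theta}\le\frac{b}{c^\theta}$, and because $h(t)>0$ this yields the pointwise bound
$$G'(t)\ge h(t)\left[\frac{d^2}{2}-\frac{b}{c^\theta}\right]=\frac{d^2}{2}\left(1-\frac{2b}{c^\theta d^2}\right)h(t).$$
Integrating over $[0,t]$ and using $G(t)-G(0)=\int_0^t G'(s)\,ds$ gives
$$G(t)\ge G(0)+\frac{d^2}{2}\left(1-\frac{2b}{c^\theta d^2}\right)\int_0^t h(s)\,ds>\frac{d^2}{2}\left(1-\frac{2b}{c^\theta d^2}\right)\int_0^t h(s)\,ds,$$
the final strict inequality being exactly where $G(0)=c^{-b}>0$ enters. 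Unwinding the definitions of $G$ and $h$ is precisely the asserted estimate \eqref{auxi1}.

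I do not expect a genuine obstacle here: the whole argument is a differentiate-and-integrate device, and the only point requiring care is the bookkeeping of exponents in computing $G'$, together with the identity $(b+1)-3b=\theta$ that lets one rewrite the stray term $e^{\varphi}(c+t)^{-(b+1)}$ as a multiple of the integrand $h$. I would also emphasize that the sign of the constant $1-\frac{2b}{c^\theta d^2}$ is irrelevant to the final step: the pointwise lower bound on $G'$ was obtained from the monotonicity of the bracket and the positivity of $h$ alone, so it integrates correctly whether that constant is positive or negative, and the conclusion holds throughout the stated range $b\in(0,\tfrac14]$.
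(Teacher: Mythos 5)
Your proof is correct and is essentially the paper's own argument: both differentiate the right-hand side $e^{\varphi(t)}(c+t)^{-b}$, bound the derivative below by $\frac{d^2}{2}\bigl(1-\frac{2b}{c^\theta d^2}\bigr)$ times the integrand using $(c+t)^\theta\ge c^\theta$, and integrate, with strictness coming from dropping the positive boundary term at $t=0$. The only differences are cosmetic (you work with $\varphi$ directly instead of $p(c+t)^\theta$ followed by multiplication by $e^{-C_3}$, and you make explicit the sign-independence of the constant, which the paper leaves implicit).
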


\begin{proof}
We have 
\begin{equation}
\label{zxeq14}
\varphi(t)= \int_0^t \frac{d^2}{2(c+s)^{2b}}ds
 = \frac{d^2}{2(1-2b)} \bigg{(}(c+t)^{1-2b} -c^{1-2b}\bigg{)} = p(c+t)^\theta - C_3,
\end{equation}
where $\theta:=1-2b,\, p:=\frac{d^2}{2\theta},\, C_3:= pc^\theta$.
One has
\begin{align*}
\frac{d}{dt}\frac{e^{p(c+t)^\theta}}{(c+t)^b} 
&= \frac{p\theta e^{p(c+t)^\theta}}{(c+t)^{b+1-\theta}}
- \frac{be^{p(c+t)^\theta}}{(c+t)^{b+1}}\\
&= \frac{e^{p(c+t)^\theta}}{(c+t)^b}\bigg{(}\frac{d^2}{2(c+t)^{2b}} - \frac{b}{c+t} \bigg{)}\\
&\ge \frac{e^{p(c+t)^\theta}}{(c+t)^b}\frac{d^2}{2(c+t)^{2b}}\bigg{(}1-\frac{2b}{c^\theta d^2} \bigg{)}.
\end{align*}
Therefore, 
\begin{align*}
\frac{d^2}{2} \bigg{(}1-\frac{2b}{c^\theta d^2} \bigg{)}\int_0^t \frac{e^{p(c+s)^\theta}}{(s+c)^{3b}} ds 
&\le \int_0^t\frac{d}{ds}\frac{e^{p(c+s)^\theta}}{(c+s)^b}ds\\
&\le \frac{e^{p(c+t)^\theta}}{(c+t)^b} - \frac{e^{pc^\theta}}{c^b}\le \frac{e^{p(c+t)^\theta}}{(c+t)^b}.
\end{align*}
Multiplying this inequality by $e^{-C_3}$ and using \eqref{zxeq14}, one obtains \eqref{auxi1}. 
Lemma~\ref{lemma8} is proved.
\end{proof}

\begin{lem}
\label{lemma9}
Let $a(t)=\frac{d}{(c+t)^b}$ and $\varphi(t):=\int_0^t \frac{a^2(s)}{2}ds$ 
where $d,c>0$, $b\in(0,\frac{1}{4}]$ and
$c^{1-2b} d^2\ge 6b$. One has
\begin{equation}
\label{auxieq3}
e^{-\varphi(t)}\int_0^t e^{\varphi(s)}|\dot{a}(s)|\|V_\delta(s)\|ds \le
 \frac{1}{2}a(t)\|V_\delta(t)\|,\qquad t\ge 0.
\end{equation} 
\end{lem}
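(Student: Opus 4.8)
The plan is to reduce the claimed inequality to a statement about $a(t)$ alone, and then verify that statement by identifying a convenient majorizing antiderivative. First I would record that, since $a(s)=d(c+s)^{-b}$ is decreasing, $\dot a(s)<0$ and
\[
|\dot a(s)| = -\dot a(s) = \frac{bd}{(c+s)^{b+1}} = \frac{b}{c+s}\,a(s),\qquad s\ge 0.
\]
Next I would invoke Lemma~\ref{lemma3}, which says that $\psi(s)=\|V_\delta(s)\|$ is increasing (and in the degenerate case $\|F(0)-f_\delta\|=0$ one has $V_\delta\equiv 0$ and both sides vanish). Hence $\|V_\delta(s)\|\le\|V_\delta(t)\|$ for $0\le s\le t$, so I may pull $\|V_\delta(t)\|$ out of the integral:
\[
e^{-\varphi(t)}\int_0^t e^{\varphi(s)}|\dot a(s)|\,\|V_\delta(s)\|\,ds
\le \|V_\delta(t)\|\, e^{-\varphi(t)}\int_0^t e^{\varphi(s)}|\dot a(s)|\,ds.
\]
It therefore suffices to prove the scalar inequality $e^{-\varphi(t)}\int_0^t e^{\varphi(s)}|\dot a(s)|\,ds \le \tfrac12 a(t)$.

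The heart of the argument is to notice that $\tfrac12 a(s)e^{\varphi(s)}$ is the right comparison function. Differentiating and using $\varphi'(s)=\tfrac12 a^2(s)$ gives
\[
\frac{d}{ds}\Big(\tfrac12 a(s)e^{\varphi(s)}\Big) = \tfrac12 e^{\varphi(s)}\Big(\dot a(s) + \tfrac12 a^3(s)\Big).
\]
Comparing with the integrand $e^{\varphi(s)}|\dot a(s)| = -e^{\varphi(s)}\dot a(s)$, I see that the desired pointwise bound $\frac{d}{ds}\big(\tfrac12 a(s)e^{\varphi(s)}\big) \ge e^{\varphi(s)}|\dot a(s)|$ is equivalent, after cancelling $e^{\varphi(s)}>0$, to $a^3(s)\ge 6|\dot a(s)|$. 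This is exactly where the hypothesis $c^{1-2b}d^2\ge 6b$ enters: writing $a^3(s)=d^3(c+s)^{-3b}$ and $6|\dot a(s)|=6bd(c+s)^{-b-1}$, the inequality $a^3(s)\ge 6|\dot a(s)|$ reduces to $d^2(c+s)^{1-2b}\ge 6b$, and since $\theta=1-2b>0$ the left side is increasing in $s$, hence minimized at $s=0$, where it equals $d^2c^{1-2b}\ge 6b$ by assumption.

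With the pointwise derivative bound in hand, the rest is immediate: integrating from $0$ to $t$ gives
\[
\int_0^t e^{\varphi(s)}|\dot a(s)|\,ds \le \tfrac12 a(t)e^{\varphi(t)} - \tfrac12 a(0)e^{\varphi(0)} \le \tfrac12 a(t)e^{\varphi(t)},
\]
where I dropped the nonnegative boundary term $\tfrac12 a(0)$ (recall $\varphi(0)=0$). Dividing by $e^{\varphi(t)}$ yields the scalar inequality, and combining it with the first reduction completes the proof. I expect the only genuinely nontrivial step to be guessing the comparison function $\tfrac12 a(s)e^{\varphi(s)}$; once it is written down, the role of the constant $6b$ and the sign condition $\theta>0$ make the verification routine. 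Note that this argument is self-contained and does not require Lemma~\ref{lemma8}, whose exponent $3b$ does not match the exponent $b+1$ appearing in $|\dot a|$.
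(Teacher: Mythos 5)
Your proof is correct and is essentially the paper's argument in streamlined form: the comparison function $\tfrac12 a(s)e^{\varphi(s)}$ is (up to a constant factor) exactly the function whose derivative Lemma~\ref{lemma8} bounds from below, and your pointwise condition $a^3(s)\ge 6|\dot a(s)|$ is precisely how the paper's proof of Lemma~\ref{lemma9} uses the hypothesis $c^{1-2b}d^2\ge 6b$ (via inequality \eqref{yeq55}), with the monotonicity of $\|V_\delta\|$ from Lemma~\ref{lemma3} playing the same role in both. The only difference is organizational — you inline Lemma~\ref{lemma8} and pull $\|V_\delta(t)\|$ out of the integral at the start rather than at the end.
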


\begin{proof} 
From Lemma \ref{lemma8}, one has
\begin{equation}
\label{yeq541}
\frac{1}{2}\bigg{(}1-\frac{2b}{c^\theta d^2} \bigg{)}\int_0^t e^{\varphi(s)}\frac{d^3}{(s+c)^{3b}} ds < e^{\varphi(t)}\frac{d}{(c+t)^b},\qquad \forall c,b\ge 0,\quad \theta=1-2b>0.
\end{equation}
Since $c^{1-2b}d^2\ge 6b$ or $\frac{6b}{c^\theta c^2_1}\le 1$, one has
$$
1-\frac{2b}{c^\theta d^2} \ge \frac{4b}{c^\theta d^2} \ge \frac{4b}{(c+s)^{1-2b}d^2},\qquad  s\ge 0.
$$
This implies 
\begin{equation}
\label{yeq55}
\frac{a^3(s)}{2}\bigg{(}1-\frac{2b}{c^\theta d^2}\bigg{)}
= \frac{d^3}{2(c+s)^{3b}}\bigg{(}1-\frac{2b}{c^\theta d^2}\bigg{)} \ge \frac{4db}{2(c+s)^{b+1}}=2|\dot{a}(s)|,\qquad s\ge 0.
\end{equation}
Multiplying \eqref{yeq541} by $\|V_\delta(t)\|$, using inequality \eqref{yeq55} and the fact that $\|V_\delta(t)\|$
is increasing, one gets, for all $t > 0$, the following inequalities:
$$
 e^{\varphi(t)}a(t)\|V_\delta(t)\| > \int_0^t e^{\varphi(s)}\|V_\delta(t)\|
\frac{a^3(s)}{2}\bigg{(}1-\frac{2b}{c^\theta d^2}\bigg{)} ds
 \ge 2\int_0^t e^{\varphi(s)} |\dot{a}(s)| \|V_\delta(s)\|  ds.
$$
This implies inequality \eqref{auxieq3}. Lemma~\ref{lemma9} is proved.
\end{proof}

%%%%%%%%%%%%%%%%%%%%%%%%%%%%%%%%%%%%%%%%%%%%%%

Let us recall the following lemma, which is basic in our proofs. 
\begin{lem}[\cite{R499}, p. 97]
\label{lemramm}
Let $\alpha(t)$, $\beta(t)$, $\gamma(t)$ be continuous 
nonnegative functions on $[t_0,\infty)$, $t_0\ge 0$
is a fixed number. If there exists a function
$$
\mu\in C^1[t_0,\infty),\quad \mu>0, \quad \lim_{t\to\infty} \mu(t)=\infty,
$$
such that
\begin{align}
\label{1eq4}
0\le \alpha(t)&\le \frac{\mu}{2}\bigg{[}\gamma -\frac{\dot{\mu}(t)}{\mu(t)}\bigg{]},
\qquad \dot{\mu}:=\frac{d\mu}{dt},\\
\label{1eq5}
\beta(t)      &\le \frac{1}{2\mu}\bigg{[}\gamma -\frac{\dot{\mu}(t)}{\mu(t)}\bigg{]},\\
\label{1eq6}
\mu(0)g(0)    &< 1,
\end{align}
and $g(t)\ge 0$ satisfies the inequality
\begin{equation}
\label{1eq7}
\dot{g}(t)\le -\gamma(t)g(t)+\alpha(t)g^2(t)+\beta(t),\quad t\ge t_0,
\end{equation}
then $g(t)$ exists on $[t_0,\infty)$ and
\begin{equation}
\label{3eq10}
0\le g(t) < \frac{1}{\mu(t)}\to 0,\quad \text{as} \quad t\to\infty.
\end{equation}
If inequalities \eqref{1eq4}--\eqref{1eq6} hold on an interval $[t_0,T)$, then
$g(t)$ exists on this interval and inequality \eqref{3eq10} holds on $[t_0,T)$.
\end{lem}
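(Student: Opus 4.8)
The plan is to exhibit $h(t):=1/\mu(t)$ as a supersolution of the Riccati-type differential inequality and then compare $g$ against $h$. First I would verify that $h$ satisfies the reversed inequality
$$
\dot h(t)\ge -\gamma(t)h(t)+\alpha(t)h^2(t)+\beta(t),\qquad t\ge t_0.
$$
Since $\dot h=-\dot\mu/\mu^2$, this reduces to showing $\gamma/\mu-\dot\mu/\mu^2\ge \alpha/\mu^2+\beta$. Dividing hypothesis \eqref{1eq4} by $\mu^2$ gives $\alpha/\mu^2\le \frac{1}{2\mu}[\gamma-\dot\mu/\mu]$, while hypothesis \eqref{1eq5} reads $\beta\le\frac{1}{2\mu}[\gamma-\dot\mu/\mu]$; adding these two yields exactly the required bound. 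Note that \eqref{1eq4}, together with $\alpha\ge0$ and $\mu>0$, forces $\gamma-\dot\mu/\mu\ge0$, which is precisely what makes $h$ a genuine supersolution.

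Next I would set $w(t):=g(t)-h(t)$ and subtract the two differential inequalities. On any subinterval $[t_0,\tau]$ contained in the interval of existence of $g$, using $\dot g\le -\gamma g+\alpha g^2+\beta$ for the subsolution $g$ and the supersolution inequality for $h$, the quadratic terms factor as $\alpha(g^2-h^2)=\alpha(g+h)w$, giving the \emph{linear} differential inequality
$$
\dot w(t)\le \big[-\gamma(t)+\alpha(t)(g(t)+h(t))\big]\,w(t).
$$
Writing $c(t)$ for the bracketed coefficient, which is continuous, hence bounded, on the compact interval $[t_0,\tau]$, the integrating factor $\exp\left(-\int_{t_0}^t c\,ds\right)$ shows that $w(t)\exp\left(-\int_{t_0}^t c\,ds\right)$ is nonincreasing, so $w(t)\le w(t_0)\exp\left(\int_{t_0}^t c\,ds\right)$. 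Because $w(t_0)<0$ by the strict initial gap \eqref{1eq6} and the exponential is positive, I obtain $w(t)<0$, i.e. $g(t)<1/\mu(t)$, throughout the interval of existence.

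The main obstacle, and the reason for routing through the integrating factor rather than a naive ``first-touching-time'' argument, is that at a putative first crossing point $t_1$ the two competing bounds for $\dot g(t_1)$ yield only $\dot g(t_1)=\dot h(t_1)$, a borderline equality rather than a contradiction. The linearized Gronwall estimate sidesteps this by propagating the strict initial gap $w(t_0)<0$ forward, producing strict inequality for every $t$ at once. Finally I would use this a priori bound to upgrade local to global existence: since $0\le g(t)<1/\mu(t)\le 1/\mu(t_0)$, the trajectory $g$ stays in a fixed bounded set, so it cannot blow up in finite time and must exist on all of $[t_0,\infty)$ (respectively on $[t_0,T)$ in the conditional statement). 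The decay $g(t)<1/\mu(t)\to0$ then follows from $\mu(t)\to\infty$.
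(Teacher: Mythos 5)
Your proof is correct. Note that the paper itself does not prove this lemma --- it is imported verbatim from \cite{R499}, p.~97 --- so there is no in-paper argument to compare against; your supersolution-plus-Gronwall comparison is essentially the standard proof of this result, and your observation that hypotheses \eqref{1eq4}--\eqref{1eq5} are calibrated exactly so that $h=1/\mu$ is a supersolution (after dividing \eqref{1eq4} by $\mu^2$ and adding \eqref{1eq5}) is the heart of the matter. You are also right that a naive first-crossing argument only yields the borderline equality $\dot w(t_1)=0$, and that propagating the strict gap $w(t_0)<0$ via the integrating factor is the clean way around this. One small inaccuracy: the bound $1/\mu(t)\le 1/\mu(t_0)$ used in the no-blow-up step requires $\mu$ to be nondecreasing, which is not assumed; but since $\mu$ is continuous, positive, and tends to infinity, $\inf_{t\ge t_0}\mu(t)>0$, so $g$ is still confined to a fixed bounded set and the continuation argument goes through unchanged (and in the paper's application $\mu=\lambda/a^2$ is in fact increasing).
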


\begin{lem}
\label{lemma11}
Suppose $M_1, c_0$, and $c_1$ are positive constants and $0\not=y\in H$.
Then there exist $\lambda>0$ and a function $a(t)\in C^1[0,\infty)$, $0<a(t)\searrow 0$, such that 
$$
|\dot{a}(t)|\le \frac{a^3(t)}{4},
$$ 
and the following conditions hold
\begin{align}
\label{eqzx0}
\frac{M_1}{\|y\|}&\le \lambda,\\
\label{eqzx1}
c_0(M_1 + a(t))&\le \frac{\lambda}{2a^2(t)}\bigg{[}a^2(t)-\frac{2|\dot{a}(t)|}{a(t)}\bigg{]},\\
\label{eqzx2}
c_1\frac{|\dot{a}(t)|}{a(t)}&\le \frac{a^2(t)}{2\lambda}\bigg{[}a^2(t)-\frac{2|\dot{a}(t)|}{a(t)}\bigg{]},\\
\label{eqzx3}
\frac{\lambda}{a^2(0)}g(0)&<1.
\end{align}
\end{lem}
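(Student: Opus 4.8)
The plan is to use the power-function ansatz $a(t)=d(c+t)^{-b}$ with $b\in(0,\tfrac14]$ fixed, exactly the form already used in Lemmas \ref{lemma8} and \ref{lemma9}, and to choose the three free parameters $c$, $d$ and $\lambda$ so that all requirements hold simultaneously. First I would record the elementary identities $|\dot a(t)|=db(c+t)^{-b-1}$, $\frac{2|\dot a(t)|}{a(t)}=\frac{2b}{c+t}$ and $\frac{|\dot a(t)|}{a^3(t)}=\frac{b}{d^2}(c+t)^{2b-1}$. Since $2b-1<0$, the last quantity is maximal at $t=0$, so imposing the single smallness condition
\begin{equation}
\label{condA}
d^2c^{1-2b}\ge 4b
\end{equation}
gives at once $|\dot a(t)|\le\tfrac14 a^3(t)$ for all $t\ge0$, the required bound on $\dot a$, and also the lower estimate for the bracket occurring in \eqref{eqzx1}--\eqref{eqzx2}:
\begin{equation}
\label{bracketbd}
a^2(t)-\frac{2|\dot a(t)|}{a(t)}=a^2(t)\Big(1-\frac{2b}{d^2(c+t)^{1-2b}}\Big)\ge\tfrac12 a^2(t),\qquad t\ge0.
\end{equation}

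Next I would use \eqref{bracketbd} to reduce the two nontrivial inequalities to bounds on $\lambda$. The right-hand side of \eqref{eqzx1} is at least $\frac{\lambda}{2a^2}\cdot\tfrac12 a^2=\frac{\lambda}{4}$, while its left-hand side is at most $c_0(M_1+a(0))$ because $a(t)$ decreases; hence \eqref{eqzx1} follows from $\lambda\ge 4c_0(M_1+a(0))$. Similarly the right-hand side of \eqref{eqzx2} is at least $\frac{a^2}{2\lambda}\cdot\tfrac12 a^2=\frac{a^4}{4\lambda}$, and substituting the explicit form turns \eqref{eqzx2} into $4c_1b\lambda(c+t)^{4b-1}\le d^4$; as $4b-1\le0$ the left side is largest at $t=0$, so \eqref{eqzx2} follows from $\lambda\le\frac{d^4c^{1-4b}}{4c_1b}$. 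Together with \eqref{eqzx0}, the task is reduced to finding $\lambda$ with
$$
\max\Big(\frac{M_1}{\|y\|},\,4c_0(M_1+a(0))\Big)\le\lambda\le\frac{d^4c^{1-4b}}{4c_1b},\qquad a(0)=\frac{d}{c^b}.
$$

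The decisive observation is a mismatch of growth rates. Fixing $b$ and $c$ and letting $d\to\infty$, the lower bound grows only linearly in $d$ (since $a(0)=dc^{-b}$), whereas the upper bound grows like $d^4$; condition \eqref{condA} also holds for all large $d$. Thus for $d$ large enough the admissible interval for $\lambda$ is nonempty, and I would simply take $\lambda$ equal to its lower bound. It then remains to verify \eqref{eqzx3}, i.e. $\lambda g(0)<a^2(0)$. Here $g(0)$ is bounded independently of $d$: in the intended application $g(0)=\|u_0-V_\delta(0)\|$, and by \eqref{2eq1} one has $\|V_\delta(0)\|\le\|y\|+\delta/a(0)$, so $g(0)\le\|u_0\|+\|y\|+\delta$ once $a(0)\ge1$. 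With $\lambda\approx 4c_0a(0)$ and $a^2(0)=d^2c^{-2b}$ one gets $a^2(0)/\lambda\sim a(0)/(4c_0)\to\infty$ as $d\to\infty$, so \eqref{eqzx3} holds for all sufficiently large $d$. I expect the only genuine difficulty to be this simultaneous bookkeeping: \eqref{eqzx2} pushes $\lambda$ down while \eqref{eqzx0}--\eqref{eqzx1} push it up, and \eqref{eqzx3} must still be met. The resolution is precisely the $d$ versus $d^4$ gap between the two sides, which lets one send $d\to\infty$ to open the interval for $\lambda$ and make the discrepancy-type condition automatic.
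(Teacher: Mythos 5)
Your proposal is correct and follows essentially the same route as the paper's proof: the same power ansatz $a(t)=d(c+t)^{-b}$ with $b\in(0,\tfrac14]$, the same observation that $|\dot a|/a^3\le\tfrac14$ forces the bracket to be at least $\tfrac12 a^2(t)$, and the same degree-counting to reconcile the lower bound on $\lambda$ coming from \eqref{eqzx0}--\eqref{eqzx1} with the upper bound coming from \eqref{eqzx2}. The only difference is bookkeeping: the paper realizes the large-parameter limit as a rescaling $a\mapsto\kappa a$, $\lambda\mapsto\kappa^2\lambda$ with $\kappa$ large, whereas you exhibit the admissible interval for $\lambda$ directly and let $d\to\infty$; both hinge on exactly the same linear-versus-quartic gap in $d$.
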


\begin{proof}
Take 
\begin{equation}
\label{24eq3}
a(t) = \frac{d}{(c+t)^b},\quad 0<b\le\frac{1}{4},\quad 4b\le 
c^{1-2b}d^2,\quad c\ge1.
\end{equation} 
Note that $|\dot{a}|=-\dot{a}$.
We have
$$
\frac{|\dot{a}|}{a^3}=\frac{b}{d^2 (c+t)^{1-2b}}\le 
\frac{b}{d^2 c^{1-2b}}\le\frac{1}{4}.
$$
Hence,
\begin{equation}
\label{24eq2}
\frac{a^2(t)}{2}\le a^2(t)-\frac{2|\dot{a}(t)|}{a(t)} .
\end{equation}
Thus, inequality \eqref{eqzx1} is satisfied if
\begin{equation}
\label{24eq1}
c_0(M_1+a(0))\le\frac{\lambda}{4}.
\end{equation}
Take
\begin{equation}
\label{3eq19}
\lambda\ge \max\bigg{(}8c_0M_1,\frac{M_1}{\|y\|}\bigg{)}.
\end{equation} 
Then \eqref{eqzx0} is satisfied and
\begin{equation}
\label{23eq2}
c_0M_1\le\frac{\lambda}{8}.
\end{equation}
For any given $g(0)$, choose $a(0)$ sufficiently large so that
$$
\frac{\lambda}{ a^2(0)}g(0)<1.
$$
Then inequality \eqref{eqzx3} is satisfied. 

Choose $\kappa\ge1$ such that
\begin{equation}
\label{23eq3}
\kappa > \max\bigg{(}\sqrt{\frac{4\lambda 
c_1b}{d^4}},\frac{8c_0a(0)}{\lambda}, 1\bigg{)}.
\end{equation}
Define 
\begin{equation}
\label{23eq4}
\nu(t):=\kappa a(t),\qquad \lambda_\kappa:=\kappa^2 \lambda.
\end{equation}
Using inequalities \eqref{23eq2}, \eqref{23eq3} and \eqref{23eq4}, one gets
$$
c_0(M_1+\nu(0))\le \frac{\lambda}{8} + c_0\nu(0)\le \frac{\lambda_\kappa}{8}+
\frac{\lambda_\kappa}{8}=\frac{\lambda_\kappa}{4}.
$$
Thus, \eqref{24eq1} holds for $a(t)=\nu(t),\, \lambda=\lambda_\kappa$.
Consequently, \eqref{eqzx1} holds for $a(t)=\nu(t),\, \lambda=\lambda_\kappa$ since
\eqref{24eq2} holds as well under this transformation, i.e.,
\begin{equation}
\label{24eq4}
\frac{\nu^2(t)}{2}\leq \nu^2(t)-\frac {2|\dot{\nu}(t)|}{\nu(t)}.
\end{equation}

Using the inequalities \eqref{23eq3} and $c\ge 1$ and the definition 
\eqref{23eq4}, one obtains
$$
4\lambda_\kappa c_1\frac{|\dot{\nu }(t)|}{\nu ^5(t)}=4\lambda c_1\frac{b}{\kappa^2 d^4(c+t)^{1-4b}}
\le 4\lambda c_1\frac{b}{\kappa^2 d^4}
\le 1.
$$
This implies
$$
c_1\frac{|\dot{\nu }|}{\nu (t)}\le \frac{\nu^4 (t)}{4\lambda_\kappa}
\le\frac{\nu^2 (t)}{2\lambda_\kappa}\bigg{[}\nu ^2-\frac{2|\dot{\nu }|}{\nu }
\bigg{]}.
$$
Thus, one can replace the function $a(t)$ by $\nu(t)=\kappa a(t)$ and 
$\lambda$ by $\lambda_\kappa=\kappa^2 \lambda$ 
in the inequalities \eqref{eqzx0}--\eqref{eqzx3}.
\end{proof}

\begin{lem}
\label{lemma12}
Suppose $M_1, c_0, c_1$ and $\tilde{\alpha}$ are positive constants and $0\not=y\in H$.
Then there exist $\lambda>0$ and a sequence $0<(a_n)_{n=0}^\infty\searrow 0$ such that the following conditions hold
\begin{align}
\label{nshyeq22}
\frac{a_n}{a_{n+1}} &\le 2,\\
\label{nshyeq23}
%g_0 =\|u_0-V_0\| &\le \frac{a_0}{\lambda}\\
\|f_\delta -F(0)\| &\le \frac{a_0^3}{\lambda},\\
\label{nshyeq24}
\frac{M_1}{\lambda}  &\le \|y\|,\\
\label{nshyeq25}
\frac{c_0(M_1+a_0)}{\lambda} &\le \frac{1}{2},\\
\label{nshyeq26}
\frac{a_n^2}{\lambda}-\frac{\tilde{\alpha} a_n^4}{2\lambda} + \frac{a_n-a_{n+1}}{a_{n+1}}c_1 &\le \frac{a_{n+1}^2}{\lambda}.
\end{align}
\end{lem}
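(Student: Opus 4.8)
The plan is to mimic the continuous construction of Lemma~\ref{lemma11} in discrete form: set $a_n=\frac{d}{(c+n)^b}$ with $b\in(0,\frac14]$, $c\ge 1$, and $d>0$ constants to be fixed at the end. This is a positive, strictly decreasing sequence tending to $0$, so the monotonicity requirement is automatic. I would first dispose of the easy condition \eqref{nshyeq22}: since $\frac{a_n}{a_{n+1}}=\left(1+\frac{1}{c+n}\right)^b$ and $c\ge 1$ forces $1+\frac{1}{c+n}\le 2$, one gets $\frac{a_n}{a_{n+1}}\le 2^b\le 2$ for every $n$. In particular this yields $a_{n+1}\ge\frac{a_n}{2}$, which I will use repeatedly.

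The central estimate is the recursive inequality \eqref{nshyeq26}. After multiplying by $\lambda$ and factoring out $a_n-a_{n+1}$, it is equivalent to
\[
(a_n-a_{n+1})\left[\frac{\lambda c_1}{a_{n+1}}+a_n+a_{n+1}\right]\le \frac{\tilde{\alpha}}{2}a_n^4 .
\]
The plan is to bound the left-hand side by the mean value theorem: $a_n-a_{n+1}\le \frac{bd}{(c+n)^{b+1}}=\frac{b}{c+n}\,a_n$, together with $\frac{1}{a_{n+1}}\le\frac{2}{a_n}$ and $a_n+a_{n+1}\le 2a_n$. This turns the left side into a sum of two terms, of orders $\frac{\lambda c_1 b}{c+n}$ and $\frac{b d^2}{(c+n)^{2b+1}}$, while the right side is of order $\frac{d^4}{(c+n)^{4b}}$. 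Because $b\le\frac14$ gives $4b\le 1\le 2b+1$ and $c\ge 1$, each negative power of $(c+n)$ on the left is dominated by $(c+n)^{-4b}$; hence \eqref{nshyeq26} reduces to the $n$-free inequalities $\lambda c_1 b\lesssim \tilde{\alpha}d^4$ and $b\lesssim \tilde{\alpha}d^2$, which hold once $d$ is large enough (for fixed $b$) relative to $\lambda$.

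Finally I would fix the remaining constants. Taking $\lambda:=\max\left(\frac{M_1}{\|y\|},\,2c_0(M_1+a_0)\right)$, where $a_0=d/c^b$, makes \eqref{nshyeq24} and \eqref{nshyeq25} hold immediately. The one genuine tension is between \eqref{nshyeq23}, which demands $a_0^3\ge\lambda\|f_\delta-F(0)\|$, i.e.\ $a_0$ large compared with $\lambda$, and \eqref{nshyeq25}, which forces $\lambda$ to grow linearly with $a_0$. This is resolved by observing that with the above choice \eqref{nshyeq23} reads $a_0^3\ge 2c_0(M_1+a_0)\|f_\delta-F(0)\|$, whose left side grows cubically and right side only linearly in $a_0$; hence it holds for all sufficiently large $a_0$. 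I expect this cubic-versus-linear balancing — choosing $d$ (equivalently $a_0$) large enough to satisfy \eqref{nshyeq23} and \eqref{nshyeq26} simultaneously while \eqref{nshyeq25} pins down $\lambda$ — to be the only real obstacle; everything else is routine once the sequence $a_n=d/(c+n)^b$ is in hand.
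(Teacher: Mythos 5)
Your proof is correct, but it reaches the conclusion by a noticeably different mechanism than the paper. The paper fixes $b=\tfrac14$, first arranges \eqref{nshyeq22}--\eqref{nshyeq24} for a base sequence $a_n=a_0(1+n)^{-1/4}$ with $\lambda\ge\max\bigl(\tfrac{M_1}{\|y\|},4c_0M_1\bigr)$, and then obtains \eqref{nshyeq25} and \eqref{nshyeq26} by a rescaling trick: replace $a_n$ by $\kappa a_n$ and $\lambda$ by $\kappa^2\lambda$. The point of the quadratic scaling of $\lambda$ is that the left side of \eqref{nshyeq26} is invariant under this substitution (both $\tfrac{a_n^2-a_{n+1}^2}{\lambda}$ and $\tfrac{a_n-a_{n+1}}{a_{n+1}}$ are unchanged) while the right side gains a factor $\kappa^2$, so a single large $\kappa$ forces the inequality; the needed discrete difference bounds are computed explicitly for $b=\tfrac14$ in \eqref{21eq1}--\eqref{21eq2}. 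You instead factor $a_n-a_{n+1}$ out of \eqref{nshyeq26}, bound it by the mean value theorem as $\tfrac{b}{c+n}a_n$, and reduce everything to $n$-free polynomial inequalities in $d$, with $\lambda$ pinned down as an explicit function of $a_0$. Your route buys two things: it works uniformly for all $b\in(0,\tfrac14]$ without the $b=\tfrac14$-specific algebra, and it makes the ``cubic versus linear'' resolution of the tension between \eqref{nshyeq23} and \eqref{nshyeq25} completely explicit. What it costs is that $\lambda$ now depends on $d$, so you must (and implicitly do) check that the \eqref{nshyeq26} constraint $\lambda c_1 b\lesssim\tilde\alpha d^4$ survives $\lambda$ growing linearly in $d$ — it does, since $O(d)\ll d^4$ — whereas the paper's $\kappa$-substitution sidesteps this bookkeeping by construction. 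It would strengthen your write-up to state that check for \eqref{nshyeq26} as explicitly as you do for \eqref{nshyeq23}, but there is no gap.
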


\begin{proof}
Let us show that if $a_0>0$ is sufficiently large, then the following 
sequence
\begin{equation}
\label{nshpeq19}
a_n = \frac{a_0}{(1+n)^b},\qquad b = \frac{1}{4},
\end{equation}
satisfies conditions \eqref{nshyeq23}--\eqref{nshyeq26} 
%
%It is easy to check that \eqref{nshyeq22} holds for this choice of $a_n$. 
%Choose 
if
\begin{equation}
\label{nshpeq20}
\lambda \ge \max\bigg{(}\frac{M_1}{\|y\|},4c_0M_1\bigg{)}.
\end{equation}
Condition \eqref{nshyeq22} is satisfied by the sequence \eqref{nshpeq19}. 
Inequality \eqref{nshyeq24} is satisfied since \eqref{nshpeq20} holds. 
Choose $a(0)$ so that
\begin{equation}
\label{nshqeq219}
a_0\ge \sqrt[3]{\|f_\delta -F(0)\| \lambda},
\end{equation}
then \eqref{nshyeq23} is satisfied. 

%Needs $\lambda \ge 4c_0a_0$

Assume that $(a_n)_{n=0}^\infty$ and $\lambda$ satisfy \eqref{nshyeq22}, \eqref{nshyeq23} and \eqref{nshyeq24}. 
Choose $\kappa\ge 1$ such that
\begin{equation}
\label{nshqeq21}
\kappa \geq\max \bigg{(}\frac{4c_0a_0}{\lambda},
\sqrt{\frac{4}{\tilde{\alpha} a_0^22\sqrt{2}}}
,\sqrt{\frac{\lambda c_1}{\tilde{\alpha}a_0^4}}
\bigg{)}.
\end{equation}
It follows from \eqref{nshqeq21} that
\begin{equation}
\label{26eq1}
\frac{4}{\kappa^2 a_0^22\sqrt{2}} \le \tilde{\alpha},\qquad \frac{\lambda c_1}{\kappa^2a_0^4}\le \tilde{\alpha}.
\end{equation}
Define 
\begin{equation}
\label{23eq5}
(b_n)_{n=0}^\infty:=(\kappa a_n)_{n=0}^\infty,\qquad \lambda_\kappa:=\kappa^2\lambda.
\end{equation} 
Using inequalities \eqref{nshpeq20}, \eqref{nshqeq21} and 
the definitions \eqref{23eq5}, one gets
$$
\frac{c_0(M_1+b_0)}{\lambda_\kappa } \le \frac{1}{4}+ 
\frac{c_0a_0}{\kappa \lambda}\le \frac{1}{4}+\frac{1}{4}=\frac{1}{2}.
$$
Thus, inequality \eqref{nshyeq25} holds for $a_0$ replaced by $b_0=\kappa 
a_0$ 
and $\lambda$ replaced by 
$\lambda_\kappa=\kappa^2\lambda$, where $\kappa$ satisfies \eqref{nshqeq21}.

For all $n\ge 0$ one has
\begin{equation}
\label{21eq1}
\frac{a_n^2 - a_{n+1}^2}{a_n^4}=\frac{a_n^4 - a_{n+1}^4}{a_n^4(a_n^2 + a_{n+1}^2)}\le \frac{a_n^4-a_{n+1}^4}{2a_{n+1}^2a_n^4}
= \frac{\frac{a_0^4}{n+1}-\frac{a_0^4}{n+2}}{2\frac{a_0^2}{\sqrt{n+2}}\frac{a_0^4}{n+1}}
=\frac{1}{a_0^22\sqrt{n+2}}\le \frac{1}{a_0^22\sqrt{2}}.
\end{equation}
Since $a_n$ is decreasing, one has
\begin{equation}
\label{21eq2}
\begin{split}
\frac{a_n - a_{n+1}}{a_n^4a_{n+1}}&= 
\frac{a_n^4 - a_{n+1}^4}{a_n^4a_{n+1}(a_n+a_{n+1})(a_n^2 + a_{n+1}^2)}\\
&\le \frac{a_n^4 - a_{n+1}^4}{4a_n^4a_{n+1}^4}
= \frac{\frac{a_0^4}{n+1}-\frac{a_0^4}{n+2}}{4\frac{a_0^4}{n+2}\frac{a_0^4}{n+1}}
 \le \frac{1}{4a_0^4},\qquad \forall n\ge0.
\end{split}
\end{equation}
Using inequalities 
\eqref{21eq1} and \eqref{26eq1}, one gets
\begin{equation}
\label{26eq2}
\frac{4(a_n^2-a_{n+1}^2)}{\kappa^2 a_n^4} \le \frac{4}{\kappa^2 a_0^22\sqrt{2}}\le \tilde{\alpha}.
\end{equation}
Similarly, using inequalities \eqref{21eq2} and \eqref{26eq1}, one gets
\begin{equation}
\label{26eq3}
\frac{4\lambda(a_n-a_{n+1})c_1}{\kappa^2a_n^4 a_{n+1}}\le \frac{\lambda c_1}{\kappa^2a_0^4}\le\tilde{\alpha}.
\end{equation}
Inequalities 
\eqref{26eq2} and \eqref{26eq3} imply
\begin{align*}
\frac{b_n^2-b_{n+1}^2}{\lambda_\kappa}+ \frac{b_n-b_{n+1}}{b_{n+1}}c_1 &= 
\frac{a_n^2-a_{n+1}^2}{\lambda}+ \frac{a_n-a_{n+1}}{a_{n+1}}c_1\\
&= \frac{\kappa^2a_n^4}{4\lambda} \frac{4(a_n^2-a_{n+1}^2)}{\kappa^2 a_n^4}+ 
\frac{\kappa^2a_n^4}{4\lambda}\frac{4\lambda(a_n-a_{n+1})c_1}{\kappa^2a_n^4 a_{n+1}}\\
&\le \frac{\kappa^2a_n^4}{4\lambda}\tilde{\alpha}+\frac{\kappa^2 a_n^4}{4\lambda} \tilde{\alpha}=
\frac{\kappa^2a_n^4 \tilde{\alpha}}{2\lambda}=\frac{\tilde{\alpha}b_n^4}{2\lambda_\kappa}.
\end{align*}
Thus, inequality \eqref{nshyeq26} holds for $a_n$ replaced by $b_n=\kappa a_n$ 
and $\lambda$ replaced by 
$\lambda_\kappa=\kappa^2\lambda$, where $\kappa$ satisfies \eqref{nshqeq21}. 
Inequalities \eqref{nshyeq22}--\eqref{nshyeq24} hold as well under this 
transformation. 
Thus, the choices $a_n=b_n$ and 
$\lambda:= \kappa\max\bigg{(}\frac{M_1}{\|y\|},4c_0M_1\bigg{)}$, 
where $\kappa$ satisfies \eqref{nshqeq21}, 
satisfy all the conditions of 
Lemma~\ref{lemma12}.
\end{proof}

\begin{rem}{\rm The constant $c_0$ and $c_1$ used in 
Lemma~\ref{lemma11} and \ref{lemma12} will be used in
Theorems \ref{mainthm} and \ref{mainthm2}. These constants are defined in 
equation \eqref{1eq10}.
The constant $\tilde{\alpha}$, used in Lemma~\ref{lemma12}, is the one from 
Theorem~\ref{mainthm2}.
This constant is defined in \eqref{25eq1}.
}
\end{rem}

\begin{rem}{\rm
\label{xrem}
Using similar arguments one can show that the 
sequence $a_n=\frac{d}{(c+n)^b}$, where $c\ge1$, $0<b\le \frac{1}{4},$ 
satisfy all conditions of Lemma~\ref{lemma4} 
provided that 
$d$ is sufficiently large and $\lambda$ is chosen so that inequality 
\eqref{nshpeq20} holds.
}
\end{rem}

\begin{rem}
\label{rem8}
{\rm
In the proof of Lemma~\ref{lemma12} and ~\ref{lemma11} the numbers $a_0$ and $\lambda$ can be chosen so 
that $\frac{a_0^2}{\lambda}$
is uniformly bounded as $\delta \to 0$ regardless of the rate
of growth of the constant $M_1=M_1(R)$ from formula \eqref{ceq3} when $R\to\infty$, 
i.e., regardless of the strength of the nonlinearity $F(u)$.
 
To satisfy \eqref{nshpeq20} one can choose $\lambda = M_1\big{(}\frac{1}{\|y\|}+4c_0\big{)}$.
To satisfy \eqref{nshqeq219} one can choose
$$
a_0 = \sqrt[3]{\lambda (\|f-F(0)\|+\|f\|)}\ge 
\sqrt[3]{\lambda \|f_\delta - F(0)\|},
$$
where we have assumed without loss of generality that $0<\|f_\delta - f\| < \|f\|$.
With this choice of $a_0$ and $\lambda$, the ratio $\frac {a^2_0}{\lambda}$
is bounded uniformly with respect to $\delta\in (0,1)$ and does not 
depend on $R$.
The dependence of $a_0$ on $\delta$ is seen from \eqref{nshqeq219}
since $f_\delta$ depends on $\delta$. 
In practice one  has $\|f_\delta-f\|<\|f\|$. Consequently, 
$$
\sqrt[3]{\|f_\delta -F(0)\| \lambda} 
%\le \sqrt[3]{(\|f -F(0)\| + \|f-f_\delta\|) \lambda}
\le \sqrt[3]{(\|f -F(0)\|+\|f\|) \lambda}.
$$
Thus, we can practically choose $a(0)$ independent of $\delta$ from the following inequality
$$
a_0 \ge \sqrt[3]{\lambda (\|f - F(0)\|+\|f\|)}.
$$
Indeed, with the above choice one has $\frac {a^2_0}{\lambda}\leq 
c(1+\sqrt[3]{\lambda^{-1}})\leq c$,
where $c>0$ is a constant independent of $\delta$, and one can 
assume that $\lambda\geq 1$ without loss of generality.

This Remark is used in the proof of the main result in Section \ref{mainsec}.
Specifically, it is used to prove that an iterative 
process \eqref{nsh3eq12} generates a sequence which stays in the ball
$B(u_0,R)$ for all $n\leq n_0 +1$, where the number $n_0$
is defined by formula \eqref{nsh4eq18} (see below), and $R>0$ is sufficiently large.
An upper bound on $R$ is given in the proof of Theorem \ref{mainthm2}, below formula 
\eqref{nshceq49}.
}
\end{rem}

\begin{rem}
\label{xrem2}
{\rm
One can choose $u_0\in H$ such that 
\begin{equation}
\label{nshteq20}
g_0:=\|u_0-V_0\|\le \frac{\|F(0)-f_\delta\|}{a_0}.
\end{equation}
Indeed, if, for example, $u_0=0$, then by Remark~\ref{remmoi} one gets
$$
g_0=\|V_0\|=\frac{a_0\|V_0\|}{a_0} \le \frac{\|F(0)-f_\delta\|}{a_0}.
$$
If \eqref{nshyeq23} and \eqref{nshteq20} hold then
$g_0 \le \frac{a_0^2}{\lambda}.$
}
\end{rem}

\section{Main results}
\label{mainsec}

\subsection{Dynamical systems gradient method}

Assume: 
\begin{equation}
\label{3eq11}
0<a(t)\searrow 0,\quad \lim_{t\to\infty}\frac{\dot{a}(t)}{a(t)}=0,
\quad \frac{|\dot{a}(t)|}{a^3(t)}\le \frac{1}{4}.
%,\quad \frac{|\dot{a}(t)|}{a^5(t)}\ll 1.
\end{equation}
Denote
$$
A:=F'(u_\delta(t)),\quad A_a:=A + aI,\quad a=a(t),
$$
where $I$ is the identity operator, 
and $u_\delta(t)$ solves the following Cauchy problem:
\begin{equation}
\label{3eq12}
\dot{u}_\delta = -A_{a(t)}^*[F(u_\delta)+a(t)u_\delta-f_\delta],\quad u_\delta(0)=u_0.
\end{equation}

\begin{thm}
\label{mainthm}
Assume that $F:H\to H$ is a monotone operator, twice Fr\'{e}chet
differentiable, $\sup_{u\in B(u_0,R)}\|F^{(j)}(u)\|\le M_j(R),\, 0\le j\le 2$,
$B(u_0,R):=\{u:\|u-u_0\|\le R\}$, $u_0$ is an element of $H$, satisfying inequality 
\eqref{deq47} (see below).
 Let $a(t)$ 
satisfy conditions of Lemma~\ref{lemma11}. For example, one 
can choose $a(t)=\frac{d}{(c+t)^b}$, 
where $b\in(0,\frac{1}{4}]$,\, 
$c\geq 1$, and $d>0$ are constants, and $d$ is sufficiently large.
Assume that equation $F(u)=f$ has a solution in $B(u_0,R)$, possibly nonunique,
and $y$ is the minimal-norm solution to this equation.
Let $f$ be unknown but $f_\delta$ be given, $\|f_\delta-f\|\le \delta$.
Then the solution $u_\delta(t)$ to problem \eqref{3eq12}
exists on an interval $[0,T_\delta]$,\, $\lim_{\delta\to0}T_\delta=\infty$, 
and 
there exists $t_\delta$, $t_\delta\in (0,T_\delta)$, not necessarily 
unique, such that 
\begin{equation}
\label{2eq3}
\|F(u_\delta(t_\delta))-f_\delta\|=C_1\delta^\zeta,
\quad \lim_{\delta\to 0}t_\delta=\infty,
\end{equation}
where $C_1>1$ and $0<\zeta\le 1$ are constants. If $\zeta\in (0,1)$ and 
$t_\delta$ satisfies \eqref{2eq3}, then
\begin{equation}
\label{2eq4}
\lim_{\delta\to 0} \|u_\delta(t_\delta) - y\|=0.
\end{equation}
\end{thm}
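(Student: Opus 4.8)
The plan is to compare the trajectory $u_\delta(t)$ of \eqref{3eq12} with the regularized solution $V_\delta(t)$ of \eqref{2eq2}, to control the difference $w(t):=u_\delta(t)-V_\delta(t)$, $g(t):=\|w(t)\|$, by the nonlinear Gronwall-type inequality of Lemma~\ref{lemramm}, and then to run a discrepancy argument on $V_\delta(t)$. First I would derive a scalar differential inequality for $g$. Differentiating \eqref{2eq2} in $t$ gives $(F'(V_\delta)+aI)\dot V_\delta=-\dot a\,V_\delta$, hence $\|\dot V_\delta\|\le\frac{|\dot a|}{a}\|V_\delta\|$ since $F'(V_\delta)\ge0$. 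Using $F(V_\delta)+aV_\delta-f_\delta=0$ one writes $F(u_\delta)+au_\delta-f_\delta=A_aw+\eta$ with Taylor remainder $\|\eta\|\le\frac{M_2}{2}g^2$, and computes $g\dot g=\langle w,\dot w\rangle$. Monotonicity enters through $\langle A_aw,w\rangle\ge a\|w\|^2$, which by Cauchy--Schwarz yields $\|A_aw\|\ge ag$ and thus the dissipative term $-\|A_aw\|^2\le-a^2g^2$; note that only $\langle Aw,w\rangle\ge0$ is used, so the possible non-self-adjointness of $A=F'(u_\delta)$ is harmless. Combining this with $\|A_aw\|\le(M_1+a)g$ and the bound on $\|\dot V_\delta\|$ produces
$$\dot g(t)\le -a^2(t)g(t)+c_0\big(M_1+a(t)\big)g^2(t)+\frac{|\dot a(t)|}{a(t)}\|V_\delta(t)\|,\qquad c_0=\tfrac{M_2}{2},$$
and via \eqref{2eq1} the last term splits into a part $\le c_1\frac{|\dot a|}{a}$ (with $c_1$ of size $\|y\|$) and a noise contribution of order $\delta a$, using $\frac{|\dot a|}{a^3}\le\frac14$ from \eqref{24eq2}.

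Next I would apply Lemma~\ref{lemramm} with $\mu(t)=\lambda/a^2(t)$, for which $\gamma-\dot\mu/\mu=a^2-2|\dot a|/a$. The hypotheses \eqref{1eq4}--\eqref{1eq6} become exactly the inequalities \eqref{eqzx1}--\eqref{eqzx3} guaranteed by Lemma~\ref{lemma11}, with the assumption on $u_0$ (cf. \eqref{eqzx3} and Remark~\ref{xrem2}) supplying $\mu(0)g(0)<1$. The lemma then gives the a priori estimate $g(t)<a^2(t)/\lambda$ on the whole interval of existence. This estimate is the engine of the continuation argument: since $\|u_\delta-u_0\|\le g+\|V_\delta-u_0\|$ and $\|V_\delta\|$ is controlled, the trajectory stays in $B(u_0,R)$ where the bounds $M_j$ are valid, and by the standard continuation theorem for the locally Lipschitz right-hand side of \eqref{3eq12} the solution extends up to a time $T_\delta$ with $\lim_{\delta\to0}T_\delta=\infty$.

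Then I would establish the stopping time $t_\delta$. The triangle inequality and $g\le a^2/\lambda$ give $\big|\,\|F(u_\delta(t))-f_\delta\|-\phi(t)\,\big|\le M_1 g(t)\le M_1 a^2(t)/\lambda$, where $\phi(t)=\|F(V_\delta(t))-f_\delta\|$ is decreasing by Lemma~\ref{lemma3}. The hypothesis on $u_0$ forces $\|F(u_\delta(0))-f_\delta\|>C_1\delta^\zeta$, while the estimate obtained in the proof of Lemma~\ref{lemma4} gives $\overline{\lim}_{t}\phi(t)\le\delta<C_1\delta^\zeta$ (here $\zeta<1$ is used); since $a^2/\lambda\to0$, the continuous function $\|F(u_\delta(t))-f_\delta\|$ drops below $C_1\delta^\zeta$, and the intermediate value theorem produces $t_\delta$ satisfying \eqref{2eq3}. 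Because a small discrepancy forces small $a(t_\delta)$, one also gets $\lim_{\delta\to0}t_\delta=\infty$.

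Finally, for \eqref{2eq4} I would split, using \eqref{rejected11} with $V:=V_\delta|_{\delta=0}$,
$$\|u_\delta(t_\delta)-y\|\le g(t_\delta)+\|V_\delta(t_\delta)-V(t_\delta)\|+\|V(t_\delta)-y\|\le\frac{a^2(t_\delta)}{\lambda}+\frac{\delta}{a(t_\delta)}+\|V(t_\delta)-y\|.$$
As $\delta\to0$ one has $t_\delta\to\infty$, so $a(t_\delta)\to0$ and the first and third terms vanish, the latter by Lemma~\ref{rejectedlem}. The crux is the middle term: from the discrepancy relation $\phi(t_\delta)\approx C_1\delta^\zeta$ together with $\phi=a\|V_\delta\|$ and the boundedness of $\|V_\delta(t_\delta)\|$ (which tends to $\|y\|\ne0$), one obtains $a(t_\delta)\gtrsim\delta^\zeta$, whence $\delta/a(t_\delta)\lesssim\delta^{1-\zeta}\to0$ precisely because $\zeta<1$. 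I expect the main obstacle to be closing the continuation loop of the second paragraph — reconciling the a priori bound $g<a^2/\lambda$ with the requirement that $u_\delta$ stay in $B(u_0,R)$ and with the consistency of the noise term $\delta a$ in the differential inequality — together with the sharp lower bound $a(t_\delta)\gtrsim\delta^\zeta$, which is what ultimately converts the strict inequality $\zeta<1$ into convergence.
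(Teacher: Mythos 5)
Your overall architecture matches the paper's: the decomposition $w=u_\delta-V_\delta$, the differential inequality $\dot g\le -a^2g+c_0(M_1+a)g^2+\beta(t)$, the application of Lemma~\ref{lemramm} with $\mu=\lambda/a^2$ to get $g<a^2/\lambda$, the intermediate-value argument producing $t_\delta$, and the final three-term splitting \eqref{eqhic56}. However, there is a genuine gap at the step you dispose of in one clause: ``Because a small discrepancy forces small $a(t_\delta)$, one also gets $\lim_{\delta\to0}t_\delta=\infty$.'' This does not follow from what you have established. The only lower bound your estimates give is $\|F(u_\delta(t))-f_\delta\|\ge a(t)\|V_\delta(t)\|-M_1a^2(t)/\lambda$, and since $a(0)$ must be taken large (to satisfy \eqref{eqzx3}), the subtracted term $M_1a^2(t)/\lambda\le\|y\|a^2(t)$ dominates $a(t)\|V_\delta(t)\|$ for small $t$, so this bound is vacuous there and cannot rule out $t_\delta$ staying bounded as $\delta\to 0$. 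The paper closes this hole with a separate, substantial argument: it introduces $v=F(u_\delta)-F(V_\delta)+a(u_\delta-V_\delta)$, $h=\|v\|$, derives $\dot h\le-\tfrac{a^2}{2}h+|\dot a|\,\|V_\delta\|$ using $\|A_aA_a^*v\|\cdot\|v\|\ge a^2h^2$ and $a\|u_\delta-V_\delta\|\le h$, integrates, controls the resulting integral by $\tfrac12 a(t)\|V_\delta(t)\|$ via Lemma~\ref{lemma9}, and controls $h(0)e^{-\varphi(t)}$ by $\tfrac14 a(t)\|V_\delta(t)\|$ using precisely hypothesis \eqref{deq47} together with \eqref{26eq5} and the monotonicity of $\|V_\delta(t)\|$. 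This yields $\|F(u_\delta(t))-f_\delta\|\ge\tfrac14a(t)\|V_\delta(t)\|$ for all $t$, whence $a(t_\delta)\to0$. Your proposal never uses \eqref{deq47} for this purpose (you misattribute the role of the hypothesis on $u_0$ to the condition $\mu(0)g(0)<1$, which is instead arranged by taking $a(0)$ large in Lemma~\ref{lemma11}), and without $t_\delta\to\infty$ the term $\|V(t_\delta)-y\|$ in your final splitting cannot be sent to zero by Lemma~\ref{rejectedlem}, so \eqref{2eq4} is not proved.

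A secondary, fixable issue: you propose to absorb the noise part of $\|V_\delta\|\le\|y\|+\delta/a$ as a term of order $\delta a$ in $\beta(t)$. Condition \eqref{1eq5} with $\mu=\lambda/a^2$ requires $\beta\lesssim a^4/\lambda$, so the piece $\tfrac{|\dot a|}{a^2}\delta\le\tfrac{a\delta}{4}$ is admissible only while $\delta\lesssim a^3(t)/\lambda$; this fails well before the time $t_0$ at which $a(t_0)\sim\delta$, which is where the discrepancy argument needs the estimate $g<a^2/\lambda$. The paper avoids this by restricting the differential inequality to $[0,t_0]$, where \eqref{4eq18} gives $\delta/a(t)\le\|y\|/(C-1)$, so the whole of $\|V_\delta\|$ is absorbed into the constant $c_1=\|y\|\bigl(1+\tfrac{1}{C-1}\bigr)$ in \eqref{1eq10}. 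You should adopt that restriction rather than carrying a $\delta a$ term.
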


\begin{rem}{\rm
One can easily choose $u_0$ satisfying  inequality \eqref{deq47}.
Note that inequality \eqref{deq47} is a sufficient condition for 
\eqref{1eq20} to hold. In our proof inequality \eqref{1eq20}
is used at $t=t_\delta$.
The stopping time $t_\delta$ is often sufficiently large for 
the quantity $e^{-\varphi(t_\delta)}h_0$ to be  small. In this case  
inequality \eqref{1eq20} with $t=t_\delta$ is satisfied for a wide range of 
$u_0$. The parameter $\zeta$ is not fixed in \eqref{2eq3}. While we could 
fix it, for example, by setting $\zeta=0.9$, it is an interesting {\it 
open problem} to propose an optimal in some sense criterion for 
choosing $\zeta$.
}
\end{rem}

\begin{proof}[Proof of Theorem~\ref{mainthm}]
 Denote 
\begin{equation}
\label{beq18}
C:=\frac{C_1+1}{2}.
\end{equation}
Let 
$$
w:=u_\delta-V_\delta,\quad g(t):=\|w\|.
$$ 
One has
\begin{equation}
\label{1eq8}
\dot{w}=-\dot{V}_\delta-A_{a(t)}^*\big{[}F(u_\delta)-F(V_\delta)+a(t)w\big{]}.
\end{equation}
We use Taylor's formula and get:
\begin{equation}
\label{1eq9}
F(u_\delta)-F(V_\delta)+aw=A_a w+ K, \quad \|K\| \le\frac{M_2}{2}\|w\|^2,
\end{equation}
where $K:=F(u_\delta)-F(V_\delta)-Aw$, and $M_2$ is the constant from the estimate \eqref{ceq3}.
%$$
%\sup_{v\in B(v_0,R)} \|F''(v)\|\le M_2(R)=:M_2.
%$$
Multiplying \eqref{1eq8} by $w$ and using \eqref{1eq9} one gets
\begin{equation}
\label{3eq17}
g\dot{g}\le -a^2g^2+\frac{M_2(M_1+a)}{2}g^3+\|\dot{V}_\delta\|g,
\end{equation}
where the estimates: $\langle A_a^* A_a w,w\rangle \ge a^2 
g^2$ and
$\|A_a\|\le M_1+a$ were used. Note that the inequality $\langle A_a^* A_a 
w,w\rangle \ge a^2 g^2$ is true if $A\ge 0$.
Since $F$ is monotone and differentiable (see \eqref{ceq3}), 
one has $A:=F'(u_\delta)\ge 0$.

Let $t_0>0$ be such that 
\begin{equation}
\label{4eq18}
\frac{\delta}{a(t_0)}= \frac{1}{C-1}\|y\|,\qquad C>1.
\end{equation} 
This $t_0$ exists and is unique since $a(t)>0$ monotonically decays to 0 as $t\to\infty$.
By Lemma~\ref{lemma4},
there exists $t_1$ such that 
\begin{equation}
\label{3eq18}
\|F(V_\delta(t_1))-f_\delta\|=C\delta,\quad F(V_\delta(t_1))+a(t_1)V_\delta(t_1)-f_\delta=0.
\end{equation}
We claim that $t_1\in[0,t_0]$. 

Indeed, from \eqref{2eq2} and \eqref{2eq1} one gets
$$
C\delta=a(t_1)\|V_\delta(t_1)\|\le a(t_1)\bigg{(}\|y\|+ \frac{\delta}{a(t_1)}\bigg{)}=a(t_1)\|y\|+\delta,\quad C>1,
$$
so
$$
\delta\le \frac{a(t_1)\|y\|}{C-1}.
$$
Thus,
$$
\frac{\delta}{a(t_1)}\le \frac{\|y\|}{C-1}=\frac{\delta}{a(t_0)}.
$$
Since $a(t)\searrow 0$, the above inequality implies  $t_1\le t_0$. 
Differentiating both sides of \eqref{2eq2} with respect to $t$, one obtains
$$
A_{a(t)}\dot{V_\delta} = -\dot{a}V_\delta.
$$
This implies
\begin{equation}
\label{beq24}
\|\dot{V_\delta}\|\le |\dot{a}|\|A_{a(t)}^{-1} V_\delta\|\le \frac{|\dot{a}|}{a}\|V_\delta\|\le 
\frac{|\dot{a}|}{a}\bigg{(}\|y\|+\frac{\delta}{a}\bigg{)}\le \frac{|\dot{a}|}{a}\|y\|\bigg{(}1+\frac{1}{C-1}\bigg{)},\quad \forall t\le t_0.
\end{equation}
Since $g\ge 0$, inequalities \eqref{3eq17} and \eqref{beq24} imply
\begin{equation}
\label{1eq10}
\dot{g}\le -a^2(t)g(t)+c_0(M_1+a(t))g^2+\frac{|\dot{a}(t)|}{a(t)}c_1,
\quad c_0=\frac{M_2}{2},\, c_1=\|y\|\bigg{(}1+\frac{1}{C-1}\bigg{)}.
\end{equation}

Inequality \eqref{1eq10} is of the type \eqref{1eq7} with
$$
\gamma(t)=a^2(t),\quad \alpha(t)=c_0(M_1+a(t)),\quad \beta(t)=c_1\frac{|\dot{a}(t)|}{a(t)}.
$$
Let us check assumptions \eqref{1eq4}--\eqref{1eq6}. Take
$$
\mu(t)=\frac{\lambda}{a^2(t)},\quad \lambda =\text{const}.
$$
By Lemma~\ref{lemma11} there exist $\lambda$ and $a(t)$ such that conditions \eqref{1eq4}--\eqref{1eq6}
hold.
Thus, Lemma \ref{lemramm} yields
\begin{equation}
\label{roeq51}
g(t)<\frac{a^2(t)}{\lambda},\quad \forall t\le t_0.
\end{equation}
Therefore,
\begin{equation}
\label{1eq11}
\begin{split}
\|F(u_\delta(t))-f_\delta\|\le& \|F(u_\delta(t))-F(V_\delta(t))\|+\|F(V_\delta(t))-f_\delta\|\\
\le& M_1g(t)+\|F(V_\delta(t))-f_\delta\|\\
\le& \frac{M_1a^2(t)}{\lambda} + \|F(V_\delta(t))-f_\delta\|,\qquad \forall t\le t_0.
\end{split}
\end{equation}
It follows from Lemma~\ref{lemma3} that $\|F(V_\delta(t))-f_\delta\|$ is decreasing. 
Since $t_1\le t_0$, one gets 
\begin{equation}
\label{3eq21}
\|F(V_\delta(t_0))-f_\delta\|\le \|F(V_\delta(t_1))-f_\delta\|= C\delta.
\end{equation}
This, inequality \eqref{1eq11}, the inequality $\frac{M_1}{\lambda}\le \|y\|$ (see \eqref{3eq19}), the relation \eqref{4eq18},
 and the definition $C_1=2C-1$ (see \eqref{beq18}) imply
\begin{equation}
\label{1eq12}
\begin{split}
\|F(u_\delta(t_0))-f_\delta\| 
\le& \frac{M_1a^2(t_0)}{\lambda} + C\delta\\
\le& \frac{M_1\delta (C-1)}{\lambda\|y\|} + C\delta\le (2C-1)\delta=C_1\delta.
\end{split}
\end{equation}
We have used the inequality 
$$a^2(t_0)\le a(t_0)=\frac{\delta (C-1)}{\|y\|}$$ 
which
is true if $\delta$ is sufficiently small, or, equivalently, if $t_0$
is sufficiently large.
Thus, if 
$$
\|F(u_\delta(0))-f_\delta\|\ge C_1\delta^\zeta,\quad 0<\zeta\le 1,
$$
then there exists $t_\delta \in (0,t_0)$ such that
\begin{equation}
\label{3eq23}
\|F(u_\delta(t_\delta))-f_\delta\|=C_1\delta^\zeta
\end{equation}
for any given $\zeta\in (0,1]$, and any fixed $C_1>1$.

{\it Let us prove \eqref{2eq4}. If this is done, then Theorem 17 is 
proved.} 

First, we prove that $\lim_{\delta \to 0}\frac {\delta}{a(t_\delta)}=0.$

From \eqref{1eq11}  with $t=t_\delta$, and from \eqref{2eq1}, one gets
\begin{align*}
C_1\delta^\zeta &\le M_1 \frac{a^2(t_\delta)}{\lambda} + a(t_\delta)\|V_\delta(t_\delta)\|\\
&\le M_1 \frac{a^2(t_\delta)}{\lambda} + \|y\|a(t_\delta)+\delta.
\end{align*}
Thus, for sufficiently small $\delta$, one gets
$$
\tilde{C}\delta^\zeta \le a(t_\delta) \bigg{(}\frac{M_1a(0)}{\lambda}+\|y\|\bigg{)},\quad \tilde{C}>0,
$$
where $\tilde{C}<C_1$ is a constant. 
Therefore, 
\begin{equation}
\label{1eq14}
\lim_{\delta\to 0} \frac{\delta}{a(t_\delta)}\le
\lim_{\delta\to 0} \frac{\delta^{1-\zeta}}{\tilde{C}}\bigg{(}\frac{M_1a(0)}{\lambda}+\|y\|\bigg{)}
=0,\quad 0<\zeta<1.
\end{equation}

{\it Secondly, we prove that}
\begin{equation}
\label{1eq15}
\lim_{\delta\to0}t_\delta = \infty.
\end{equation}
Using \eqref{3eq12}, one obtains:
$$
\frac{d}{dt}\big{(}F(u_\delta)+au_\delta - f_\delta\big{)}= A_a\dot{u}_\delta + \dot{a}u_\delta
= -A_a A_a^*\big{(}F(u_\delta)+au_\delta - f_\delta\big{)} + \dot{a}u_\delta.
$$
This and \eqref{2eq2} imply:
\begin{equation}
\label{beq32}
\frac{d}{dt}\big{[}F(u_\delta)-F(V_\delta)+a(u_\delta-V_\delta)\big{]}
= - A_a A_a^*\big{[}F(u_\delta)-F(V_\delta) + a(u_\delta - V_\delta)\big{]} + \dot{a}u_\delta.
\end{equation}
Denote 
$$
v:=F(u_\delta)-F(V_\delta)+a(u_\delta-V_\delta),\quad h=\|v\|.
$$ 
Multiplying \eqref{beq32} by $v$ and using monotonicity of $F$, one obtains
\begin{equation}
\label{2eq5}
\begin{split}
h\dot{h} &= -\langle A_a A_a^* v, v\rangle +\langle v,\dot{a}(u_\delta-V_\delta)\rangle + \dot{a}\langle v,V_\delta\rangle\\ 
&\le -h^2a^2 + h|\dot{a}|\|u_\delta-V_\delta\| + |\dot{a}|h\|V_\delta\|,\qquad h\ge 0.
\end{split}
\end{equation}
Again, we have used the inequality $A_aA_a^* \ge a^2$, which holds for 
$A\geq 0$, i.e., monotone operators $F$.  
Thus,
\begin{equation}
\label{beq34}
\dot{h}\le -ha^2 + |\dot{a}|\|u_\delta - V_\delta\| + |\dot{a}|\|V_\delta\|.
\end{equation}
Since $\langle F(u_\delta)-F(V_\delta),u_\delta-V_\delta\rangle \ge 0$, one obtains two inequalities
\begin{equation}
\label{beq35}
a\|u_\delta - V_\delta\|^2 \le \langle v, u_\delta-V_\delta \rangle \le 
\|u_\delta - V_\delta\|h,
\end{equation}
and
\begin{equation}
\label{beq36}
\|F(u_\delta)-F(V_\delta)\|^2\le \langle v, F(u_\delta)-F(V_\delta) \rangle
\le h\|F(u_\delta)-F(V_\delta)\|.
\end{equation}
Inequalities \eqref{beq35} and \eqref{beq36} imply: 
\begin{equation}
\label{1eq16}
a\|u_\delta-V_\delta\|\le h,\quad \|F(u_\delta)-F(V_\delta)\|\le h.
\end{equation} 
Inequalities \eqref{beq34} and \eqref{1eq16} imply
\begin{equation}
\label{beq38}
\dot{h} \le -h\bigg{(}a^2-\frac{|\dot{a}|}{a}\bigg{)} +|\dot{a}|\|V_\delta\|. 
\end{equation}
Since $a^2-\frac{|\dot{a}|}{a}\ge \frac{3a^2}{4}>\frac{a^2}{2}$ 
by the last inequality in \eqref{3eq11}, 
it follows from inequality \eqref{beq38} that
\begin{equation}
\label{beq39}
\dot{h} \le -\frac{a^2}{2}h + |\dot{a}|\|V_\delta\|.
\end{equation}
Inequality \eqref{beq39} implies:
\begin{equation}
\label{1eq17}
h(t)\le h(0)e^{-\int_0^t\frac{a^2(s)}{2}ds} + e^{-\int_0^t\frac{a^2(s)}{2}ds}
\int_0^t e^{\int_0^s\frac{a^2(\xi)}{2}d\xi}|\dot{a}(s)|\|V_\delta(s)\|ds.
\end{equation}
Denote 
$$\varphi(t):=\int_0^t\frac{a^2(s)}{2}ds.$$ 
From \eqref{1eq17} and 
\eqref{1eq16}, one gets
\begin{equation}
\|F(u_\delta(t))-F(V_\delta(t))\| \le 
h(0)e^{-\varphi(t)} + e^{-\varphi(t)}\int_0^t 
e^{\varphi(s)}|\dot{a}(s)|\|V_\delta(s)\|ds.
\end{equation}
Therefore,
\begin{equation}
\label{1eq18}
\begin{split}
\|F(u_\delta(t))-f_\delta\|&\ge \|F(V_\delta(t))-f_\delta\|-\|F(V_\delta(t))-F(u_\delta(t))\|\\
&\ge a(t)\|V_\delta(t)\| - h(0)e^{-\varphi(t)} - 
e^{-\varphi(t)}\int_0^t e^{\varphi(s)} |\dot{a}| \|V_\delta\|ds.
\end{split}
\end{equation}
From Lemma~\ref{lemma9} it follows that there exists an 
$a(t)$ such that 
\begin{equation}
\label{1eq19}
\frac{1}{2}a(t)\|V_\delta(t)\| \ge  e^{-\varphi(t)}\int_0^te^{\varphi(s)}|\dot{a}| \|V_\delta(s)\|ds.
\end{equation}
For example, one can choose 
\begin{equation}
\label{ddeq47}
a(t)=\frac{c_1}{(c+t)^b}, \quad b\in (0,\frac{1}{4}],\quad c_1^2 c^{1-2b}\ge 6b,
\end{equation}
where 
$c_1,c>0$.
Moreover, one can always choose $u_0$ such that 
\begin{equation}
\label{deq47}
h(0)=\|F(u_0) + a(0)u_0 -f_\delta\| \le \frac{1}{4} a(0)\|V_\delta(0)\|,
\end{equation}
because the equation 
$$
F(u_0) + a(0)u_0 -f_\delta=0
$$ 
is solvable.

If \eqref{deq47} holds, then
\begin{equation}
\label{23eq6}
h(0)e^{-\varphi(t)}\le \frac{1}{4}a(0)\|V_\delta(0)\|e^{-\varphi(t)},\qquad 
t\ge 0.
\end{equation}
If \eqref{ddeq47} holds, $c\ge1$  and $2b\le c_1^2$, then it follows that 
\begin{equation}
\label{26eq5}
e^{-\varphi(t)}a(0)\le a(t).
\end{equation}
Indeed, inequality $a(0)\le a(t)e^{\varphi(t)}$ is obviously true for 
$t=0$, and $\big(a(t)e^{\varphi(t)}\big)'_t\geq 0$, provided that $c\geq 
1$ and $2b\le c_1^2$. 

Inequalities \eqref{23eq6} and \eqref{23eq5} imply
\begin{equation}
\label{1eq20}
e^{-\varphi(t)}h(0) 
\le \frac{1}{4} a(t)\|V_\delta(0)\|
\le \frac{1}{4} a(t)\|V_\delta(t)\|,\quad t\ge 0.
\end{equation}
where we have used the inequality $\|V_\delta(t)\|\le \|V_\delta(t')\|$ for $t\le t'$, 
established in Lemma~\ref{lemma3}.
From \eqref{3eq23} and \eqref{1eq18}--\eqref{1eq20}, one gets
$$
C\delta^\zeta = \|F(u_\delta(t_\delta))-f_\delta\|\ge \frac{1}{4}a(t_\delta)\|V_\delta(t_\delta)\|.
$$
Thus,
$$
\lim_{\delta\to0}a(t_\delta)\|V_\delta(t_\delta)\|\le 
\lim_{\delta\to0}4C\delta^\zeta = 0.
$$
Since $\|V_\delta(t)\|$ is increasing, this implies 
$\lim_{\delta\to0}a(t_\delta)=0$. 
Since $0<a(t)\searrow 0$, it follows that  
\eqref{1eq15} holds. 

From the triangle inequality and inequalities \eqref{roeq51} and \eqref{rejected11} one obtains
\begin{equation}
\label{eqhic56}
\begin{split}
\|u_\delta(t_\delta) - y\| &\le \|u_{\delta}(t_\delta) - V_{\delta}\| + 
\|V(t_\delta) - V_\delta(t_\delta)\| + \|V(t_\delta) - y\|\\
&\le \frac{a^2(t_\delta)}{\lambda} + \frac{\delta}{a(t_\delta)} + \|V(t_\delta)-y\|.
\end{split}
\end{equation}
From \eqref{1eq14}, \eqref{1eq15}, inequality \eqref{eqhic56} and Lemma~\ref{rejectedlem}, one obtains
\eqref{2eq4}. Theorem~\ref{mainthm} is proved.
\end{proof}

\subsection{An iterative scheme}

Let $V_{n,\delta}$ solve the equation:
$$
F(V_{n,\delta}) + a_n V_{n,\delta} - f_\delta = 0.
$$
Denote $V_n:=V_{n,\delta}$. 

Consider the following iterative scheme:
\begin{equation}
\label{nsh3eq12}
\begin{split}
u_{n+1} &= u_n - \alpha_n A_n^*[F(u_n)+a_n u_n - f_\delta],\quad A_n:=F'(u_n)+ a_nI,\quad u_0=u_0,
\end{split}
\end{equation}
where $u_0$ is chosen so that inequality \eqref{nshteq20} holds, and $\{\alpha_n\}_{n=1}^\infty$
is a positive sequence such that
\begin{equation}
\label{25eq1}
0<\tilde{\alpha}\le \alpha_n \le \frac{2}{a_n^2 + (M_1+a_n)^2},
\qquad ||A_n||\leq M_1+a_n.
\end{equation}
It follows from this condition that
\begin{equation}
\label{25eq2}
\|1-\alpha_n A_{a_n}^*A_{a_n}\|= \sup_{a_n^2\leq \lambda \leq 
(M_1+a_n)^2}|1-\alpha_n \lambda| 
\le 1 - \alpha_n a_n^2.
\end{equation} 
Note that  $F'(u_n)\ge 0$ since $F$ is monotone. 

Let $a_n$ and $\lambda$ 
satisfy conditions \eqref{nshyeq22}--\eqref{nshyeq26}.
Assume that equation $F(u)=f$ has a solution in $B(u_0,R)$, possibly nonunique,
and $y$ is the minimal-norm solution to this equation. 
Let $f$ be unknown but $f_\delta$ be given, and $\|f_\delta-f\|\le \delta$.
We prove the following result:

\begin{thm}
\label{mainthm2}
Assume $a_n=\frac{d}{(c+n)^b}$ where $c\ge 1,\, 0<b\le \frac{1}{4}$, and $d$ is sufficiently large
so that conditions \eqref{nshyeq22}--\eqref{nshyeq26} hold. 
Let $u_n$ be defined by \eqref{nsh3eq12}. Assume that $u_0$ is chosen so that \eqref{nshteq20} holds. 
Then there exists a unique $n_\delta$ such that
\begin{equation}
\label{nsh2eq3}
\|F(u_{n_\delta})-f_\delta\|\le C_1\delta^\zeta,\quad
C_1\delta^\zeta < \|F(u_{n})-f_\delta\|,\quad \forall n< n_\delta,
\quad 
\end{equation}
where $C_1>1,\, 0<\zeta\le 1$.

Let $0<(\delta_m)_{m=1}^\infty$ be a sequence such that $\delta_m\to 0$. 
If the sequence $\{n_m:=n_{\delta_m}\}_{m=1}^\infty$ is bounded, and $\{n_{m_j}\}_{j=1}^\infty$ 
is a convergent subsequence, then
\begin{equation}
\label{nshfeq15}
\lim_{j\to\infty} u_{n_{m_j}} = \tilde{u},
\end{equation}
where $\tilde{u}$ is a solution to the equation $F(u)=f$.
If 
\begin{equation}
\label{nshfeq16}
\lim_{m\to \infty}n_m=\infty,
\end{equation}
where  $\zeta\in (0,1)$, then
\begin{equation}
\label{nshfeq17}
\lim_{m\to \infty} \|u_{n_m} - y\|=0.
\end{equation}
\end{thm}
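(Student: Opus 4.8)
The plan is to discretize the argument used for Theorem~\ref{mainthm}. Writing $V_n:=V_{n,\delta}$ and introducing the discrete counterpart of $g(t)=\|u_\delta-V_\delta\|$, namely $g_n:=\|u_n-V_n\|$, I would subtract the defining equation $F(V_n)+a_nV_n-f_\delta=0$ from the iteration \eqref{nsh3eq12} and apply Taylor's formula as in \eqref{1eq9}, with remainder $K_n:=F(u_n)-F(V_n)-F'(u_n)(u_n-V_n)$ bounded by $\tfrac{M_2}{2}g_n^2$. Using $\|A_n\|\le M_1+a_n$ (see \eqref{25eq1}), this yields a recursive inequality
\begin{equation}
\label{planrec}
g_{n+1}\le \|1-\alpha_n A_n^*A_n\|\,g_n + \alpha_n c_0(M_1+a_n)g_n^2 + \|V_{n+1}-V_n\|,\qquad c_0=\tfrac{M_2}{2}.
\end{equation}
The spectral bound \eqref{25eq2} replaces the first norm by $1-\alpha_n a_n^2$, the discrete image of the factor $-a^2(t)$ in \eqref{1eq10}. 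To control the inhomogeneous term I would subtract the equations for $V_{n+1}$ and $V_n$, multiply by $V_{n+1}-V_n$, and use monotonicity to get $\|V_{n+1}-V_n\|\le\frac{a_n-a_{n+1}}{a_{n+1}}\|V_n\|$; since for $n\le n_0$ one has $\delta/a_n\le\|y\|/(C-1)$ and hence $\|V_n\|\le\|y\|+\delta/a_n\le c_1$ by \eqref{2eq1}, this term is dominated by $\frac{a_n-a_{n+1}}{a_{n+1}}c_1$. Thus \eqref{planrec} becomes the exact discrete analogue of \eqref{1eq10}.

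The heart of the proof is then the claim $g_n\le a_n^2/\lambda$ for all admissible $n$, which I would prove by induction. The base case is Remark~\ref{xrem2}: the choice of $u_0$ in \eqref{nshteq20} together with \eqref{nshyeq23} gives $g_0\le a_0^2/\lambda$. For the inductive step, assuming $g_n\le a_n^2/\lambda$, I substitute into \eqref{planrec}, use \eqref{nshyeq25} to bound $c_0(M_1+a_n)\le\lambda/2$ (hence the quadratic term by $\alpha_n a_n^4/(2\lambda)$), and use $\alpha_n\ge\tilde\alpha$ to arrive at
\begin{equation}
\label{planind}
g_{n+1}\le \frac{a_n^2}{\lambda}-\frac{\tilde\alpha a_n^4}{2\lambda}+\frac{a_n-a_{n+1}}{a_{n+1}}c_1.
\end{equation}
By condition \eqref{nshyeq26} the right-hand side is $\le a_{n+1}^2/\lambda$, closing the induction. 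This is where Lemma~\ref{lemma12} is used in full: conditions \eqref{nshyeq22}--\eqref{nshyeq26} are engineered precisely so that this single line goes through.

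With $g_n\le a_n^2/\lambda$ available, the discrepancy principle \eqref{nsh2eq3} follows as in \eqref{1eq11}--\eqref{1eq12}: the triangle inequality gives $\|F(u_n)-f_\delta\|\le M_1 a_n^2/\lambda+a_n\|V_n\|$, and at the integer $n_0$ of Remark~\ref{rem8} (the discrete analogue of \eqref{4eq18}) this is $\le C_1\delta\le C_1\delta^\zeta$. Hence the first index $n_\delta$ at which the discrepancy drops to or below $C_1\delta^\zeta$ exists and satisfies $n_\delta\le n_0$; its uniqueness is immediate from its definition as a first hitting index (cf. Remark~\ref{rem2.9}). The step I expect to be the main obstacle is that the Taylor bound in \eqref{planrec} requires every iterate to lie in $B(u_0,R)$ so that \eqref{ceq3} applies, which is circular since staying in the ball is itself a consequence of $g_n\le a_n^2/\lambda$. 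I would resolve this by a simultaneous induction, estimating $\|u_n-u_0\|\le g_n+\|V_n-u_0\|\le a_n^2/\lambda+(\|y\|+\delta/a_n+\|u_0\|)$ and invoking Remark~\ref{rem8}, which ensures $a_0^2/\lambda$ is bounded uniformly in $\delta$, so that a single fixed $R$ works for all $n\le n_0+1$.

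Finally, the convergence statements mirror \eqref{eqhic56}. If $(n_m)$ is bounded, a convergent integer subsequence is eventually constant, so for large $j$ the term $u_{n_{m_j}}$ is a fixed iterate $u_N$ which (with $a_n,\alpha_n$ chosen independently of $\delta$ as in Remark~\ref{rem8}) depends continuously on $f_{\delta_{m_j}}\to f$; passing to the limit in $\|F(u_{n_{m_j}})-f_{\delta_{m_j}}\|\le C_1\delta_{m_j}^\zeta\to0$ yields a limit $\tilde u$ with $F(\tilde u)=f$, i.e. \eqref{nshfeq15}. If instead $n_m\to\infty$, I decompose $\|u_{n_m}-y\|\le a_{n_m}^2/\lambda+\delta_m/a_{n_m}+\|V_{n_m,0}-y\|$ using \eqref{rejected11} and $g_{n_m}\le a_{n_m}^2/\lambda$; the first term vanishes since $a_{n_m}\to0$, the last by Lemma~\ref{rejectedlem}. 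The only nontrivial point is $\delta_m/a_{n_m}\to0$, for which I use the last index above threshold: $C_1\delta^\zeta<\|F(u_{n_\delta-1})-f_\delta\|\le M_1 a_{n_\delta-1}^2/\lambda+\|y\|a_{n_\delta-1}+\delta$ forces $a_{n_\delta-1}\gtrsim\delta^\zeta$, so $\delta/a_{n_\delta-1}\lesssim\delta^{1-\zeta}\to0$ when $\zeta<1$, and condition \eqref{nshyeq22} (giving $a_{n_\delta-1}\le 2a_{n_\delta}$) transfers this to $\delta/a_{n_\delta}\to0$, completing \eqref{nshfeq17}.
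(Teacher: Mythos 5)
Your proposal follows essentially the same route as the paper's proof: the same recursion for $g_n=\|u_n-V_n\|$ via Taylor's formula and the spectral bound \eqref{25eq2}, the same induction $g_n\le a_n^2/\lambda$ closed by condition \eqref{nshyeq26}, the same ball-containment estimate via Remark~\ref{rem8}, and the same case split (eventually constant subsequence vs.\ $n_m\to\infty$ with $\delta_m/a_{n_m}\to 0$ extracted from the last index above threshold). Apart from a harmless mislabeling ($n_0$ is defined by \eqref{nsh4eq18}, not in Remark~\ref{rem8}), the argument matches the paper's.
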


\begin{proof}
Denote 
\begin{equation}
\label{nshbeq18}
C:=\frac{C_1+1}{2}.
\end{equation}
Let 
$$
z_n:=u_n-V_n,\quad g_n:=\|z_n\|.
$$ 
We use Taylor's formula and get:
\begin{equation}
\label{nsh1eq9}
F(u_n)-F(V_n)+a_nz_n=A_{n} z_n+ K_n, \quad \|K_n\| \le\frac{M_2}{2}\|z_n\|^2,
\end{equation}
where $K_n:=F(u_n)-F(V_n)-F'(u_n)z_n$ and $M_2$ is the constant 
from \eqref{ceq3}.
From \eqref{nsh3eq12} and \eqref{nsh1eq9} one obtains
\begin{equation}
\label{nsh1eq8}
z_{n+1} = z_n - \alpha_n A_n^*A_nz_n - \alpha_n A_n^*K(z_n) - (V_{n+1}-V_{n}).
\end{equation}
From \eqref{nsh1eq8}, \eqref{nsh1eq9}, \eqref{25eq2}, and the 
estimate $\|A_n\|\le M_1+a_n$, one gets
\begin{equation}
\label{nsh3eq17}
\begin{split}
g_{n+1}   &\le g_n\|1 - \alpha_n A_n^*A_n\|+ \frac{\alpha_n M_2(M_1+a_n)}{2}g_n^2 + \|V_{n+1}-V_n\|\\
          &\le g_n(1-\alpha_n a_n^2)+\frac{\alpha_n M_2(M_1+a_n)}{2}g_n^2 
+ \|V_{n+1}-V_n\|.
\end{split}
\end{equation}
Since $0<a_n\searrow 0$, for any fixed $\delta>0$ there exists $n_0$ such that
\begin{equation}
\label{nsh4eq18}
\frac{\delta}{a_{n_0+1}}> \frac{1}{C-1}\|y\|\ge \frac{\delta}{a_{n_0}},\qquad C>1.
\end{equation} 
By \eqref{nshyeq22}, one has $\frac{a_n}{a_{n+1}}\le 2,\, \forall\, n\ge 0$. This and \eqref{nsh4eq18} imply
\begin{equation}
\label{nsheeq16}
\frac{2}{C-1}\|y\|\ge \frac{2\delta}{a_{n_0}} >\frac{\delta}{a_{n_0+1}}>
 \frac{1}{C-1}\|y\|\ge \frac{\delta}{a_{n_0}},\qquad C>1.
\end{equation}
Thus,
\begin{equation}
\label{nshceq18}
\frac{2}{C-1}\|y\|> \frac{\delta}{a_{n}},\quad \forall n \le n_0 + 1.
\end{equation}
The number $n_0$, satisfying \eqref{nshceq18}, exists and is unique since $a_n>0$ monotonically decays to 0 as $n\to\infty$.
By Remark~\ref{rem2.9},
there exists  a number $n_1$ such that 
\begin{equation}
\label{nsh3eq18}
\|F(V_{n_1+1})-f_\delta\|\le C\delta < \|F(V_{n_1})-f_\delta\|, %,\quad F(V_{n_1})+a_{n_1}V_{n_1}-f_\delta=0.
\end{equation}
where $V_n$ solves the equation $F(V_{n})+a_{n}V_{n}-f_\delta=0$. 

{\it We claim that $n_1\in[0,n_0]$.} 

Indeed, 
one has $\|F(V_{n_1})-f_\delta\|=a_{n_1}\|V_{n_1}\|$, and $\|V_{n_1}\|\le \|y\|+\frac{\delta}{a_{n_1}}$ 
(cf. \eqref{2eq1}), so
\begin{equation}
\label{nsheeq19}
C\delta < a_{n_1}\|V_{n_1}\|\le a_{n_1}\bigg{(}\|y\|+ \frac{\delta}{a_{n_1}}\bigg{)}=a_{n_1}\|y\|+\delta,\quad C>1.
\end{equation}
Therefore,
\begin{equation}
\label{nsheeq20}
\delta < \frac{a_{n_1}\|y\|}{C-1}.
\end{equation}
Thus, by \eqref{nsheeq16}, 
\begin{equation}
\label{nshyeq36}
\frac{\delta}{a_{n_1}} < \frac{\|y\|}{C-1} < \frac{\delta}{a_{n_0+1}}.
\end{equation}
Here the last inequality is a consequence of \eqref{nsheeq16}.
Since $a_n$ decreases monotonically, inequality \eqref{nshyeq36} implies $n_1\le n_0$. 
One has
\begin{equation}
\label{nsheeq21}
\begin{split}
a_{n+1} \|V_n-V_{n+1}\|^2 &= \langle (a_{n+1} - a_{n})  V_{n} - F(V_n) + F(V_{n+1}), V_n - V_{n+1} \rangle \\
&\le \langle (a_{n+1} - a_{n})  V_{n}, V_n - V_{n+1} \rangle \\
&\le (a_{n}-a_{n+1}) \|V_{n}\| \|V_n - V_{n+1}\|.
\end{split}
\end{equation}
By \eqref{2eq1}, $\|V_n\|\le \|y\|+\frac{\delta}{a_n}$, and, by \eqref{nshceq18}, 
$\frac{\delta}{a_n}\le \frac{2\|y\|}{C-1}$ for all $ n\le n_0+1$.
Therefore,  
\begin{equation}
\label{nshceq40}
\|V_n\|\le \|y\|\bigg{(}1+\frac{2}{C-1}\bigg{)},\qquad \forall n\le n_0+1,
\end{equation}
and, by \eqref{nsheeq21}, 
\begin{equation}
\label{nshbeq24}
\|V_n-V_{n+1}\| \le \frac{a_n-a_{n+1}}{a_{n+1}}\|V_{n}\|\le \frac{a_n-a_{n+1}}{a_{n+1}}\|y\|
\bigg{(}1+\frac{2}{C-1}\bigg{)},\quad \forall n\le n_0+1.
\end{equation}
Inequalities \eqref{nsh3eq17} and \eqref{nshbeq24} imply
\begin{equation}
\label{nsh1eq10}
g_{n+1}\le (1-\alpha_n a_n^2)g_n + \alpha_n c_0(M_1+a_n)g_n^2+\frac{a_n-a_{n+1}}{a_{n+1}}c_1,\qquad \forall \, n\le n_0+1,
%\quad c_0=\frac{M_2}{2},\quad c_1=\|y\|\bigg{(}1+\frac{2}{C-1}\bigg{)},
\end{equation}
where the constants $c_0$ and $c_1$ are defined in \eqref{1eq10}.

By Lemma~\ref{lemma4} and Remark~\ref{xrem}, the sequence 
$(a_n)_{n=1}^\infty$, satisfies conditions \eqref{nshyeq22}--\eqref{nshyeq26}, 
provided that $a_0$ is sufficiently large and 
$\lambda>0$ is 
chosen so that \eqref{nshpeq20} holds.
Let us show by induction that 
\begin{equation}
\label{nshceq15}
g_n<\frac{a_n^2}{\lambda},\qquad 0\le n\le n_0+1.
\end{equation}
Inequality \eqref{nshceq15} holds for $n=0$ by Remark~\ref{xrem2}. Suppose \eqref{nshceq15} holds for some $n\ge 0$. 
From \eqref{nsh1eq10}, \eqref{nshceq15} and \eqref{nshyeq26}, one gets
\begin{equation}
\begin{split}
g_{n+1}&\le (1-\alpha_n a_n^2)\frac{a_n^2}{\lambda}+ 
\alpha_n c_0(M_1+a_n)\bigg{(}\frac{a_n^2}{\lambda}\bigg{)}^2 + \frac{a_n-a_{n+1}}{a_{n+1}}c_1\\
&=\frac{a_n^4}{\lambda}\bigg{(}\frac{\alpha_n c_0(M_1+a_n)}{\lambda}-\alpha_n\bigg{)}+ 
\frac{a_n^2}{\lambda} + \frac{a_n-a_{n+1}}{a_{n+1}}c_1\\
&\le -\frac{\alpha_n a_n^4}{2\lambda}+ 
\frac{a_n^2}{\lambda} + \frac{a_n-a_{n+1}}{a_{n+1}}c_1\\
&\le \frac{a_{n+1}^2}{\lambda}.
\end{split}
\end{equation}
Thus, by induction, inequality \eqref{nshceq15} holds for all $n$ in the region $0\le n\le n_0+1$.

From \eqref{2eq1} one has $\|V_n\| \le \|y\|+\frac{\delta}{a_n}$. 
This and the triangle inequality imply 
\begin{equation}
\label{nshceq49}
\|u_0-u_n\| \le \|u_0\|+ \|z_n\|+ \|V_n\|\le \|u_0\|+\|z_n\|+ \|y\|+\frac{\delta}{a_n}.
\end{equation}
Inequalities \eqref{nshceq40}, \eqref{nshceq15},
and \eqref{nshceq49} guarantee that the sequence $u_n$, generated by the 
iterative process \eqref{nsh3eq12}, remains
in the ball $B(u_0,R)$ for all $n\le n_0+1$, where 
$R\le \frac{a_0}{\lambda}+\|u_0\|+\|y\|+ \frac{\delta}{a_n}$.
This inequality and the estimate \eqref{nshceq18} imply that the sequence 
$u_n$, $n\le 
n_0+1,$ stays in the ball $B(u_0,R)$,
where 
\begin{equation}
\label{26eq6}
R\le \frac{a_0}{\lambda}+ \|u_0\|+\|y\|+ \|y\|\frac{C+1}{C-1}.
\end{equation}
By Remark~\ref{rem8}, one can choose $a_0$ and $\lambda$ so that 
$\frac{a_0}{\lambda}$
is uniformly bounded as $\delta \to 0$ even if $M_1(R)\to\infty$ as 
$R\to\infty$ at an arbitrary fast rate.
Thus, the sequence $u_n$ stays in the ball $B(u_0,R)$ for $n\leq n_0+1$
when $\delta\to 0$. An upper bound on
$R$ is given above. It does not depend on $\delta$ as 
$\delta\to 0$.

One has:
\begin{equation}
\label{nsh1eq11}
\begin{split}
\|F(u_n)-f_\delta\|\le& \|F(u_n)-F(V_n)\|+\|F(V_n)-f_\delta\|\\
\le& M_1g_n+\|F(V_n)-f_\delta\|\\
\le& \frac{M_1a_n^2}{\lambda} + \|F(V_n)-f_\delta\|,\qquad \forall n\le n_0+1,
\end{split}
\end{equation}
where \eqref{nshceq15} was used and $M_1$ is the constant from \eqref{ceq3}. 
Since $\|F(V_n)-f_\delta\|$ is decreasing, by Lemma~\ref{lemma3}, and
$n_1\le n_0$, one gets 
\begin{equation}
\label{nsh3eq21}
\|F(V_{n_0+1})-f_\delta\|\le \|F(V_{n_1+1})-f_\delta\| \le C\delta.
\end{equation}
From \eqref{nshyeq24}, \eqref{nsh1eq11}, \eqref{nsh3eq21}, 
the relation \eqref{nsh4eq18}, 
and the definition $C_1=2C-1$ (see \eqref{nshbeq18}), one concludes that
\begin{equation}
\label{nsh1eq12}
\begin{split}
\|F(u_{n_0+1})-f_\delta\| 
\le& \frac{M_1a_{n_0+1}^2}{\lambda} + C\delta \\
%\le& \frac{2M_1a_{n_0+1}}{\lambda} + C\delta\\
\le& \frac{M_1\delta (C-1)}{\lambda\|y\|} + C\delta\le (2C-1)\delta=C_1\delta.
\end{split}
\end{equation}
{\it Thus, if 
$$
\|F(u_0)-f_\delta\|> C_1\delta^\zeta,\quad 0<\zeta\le 1,
$$
then one concludes from \eqref{nsh1eq12} that there exists 
$n_\delta$, $0<n_\delta \le 
n_0+1,$ such that
\begin{equation}
\label{nsh3eq23}
\|F(u_{n_\delta})-f_\delta\| \le C_1\delta^\zeta < \|F(u_{n})-f_\delta\|,\quad 0\le n< n_\delta,
\end{equation}
for any given $\zeta\in (0,1]$, and any fixed $C_1>1$.}

{\it Let us prove \eqref{nshfeq15}.}

 If $n>0$ is fixed, then 
$u_{\delta,n}$ is a
continuous function of $f_\delta$. Denote
\begin{equation}
\label{nshceq46}
\tilde{u}:=\tilde{u}_N=\lim_{\delta\to 0}u_{\delta,n_{m_j}},
\end{equation}
where 
$$
\lim_{j\to\infty}n_{m_j} = N.
$$
From \eqref{nshceq46} and the continuity of $F$, one obtains:
$$
\|F(\tilde{u})-f_\delta\| = \lim_{j\to\infty}\|F(u_{n_{m_j}})-f_\delta\|\le \lim_{\delta\to 0}C_1\delta^\zeta = 0.
$$
Thus, $\tilde{u}$ is a solution to the equation $F(u)=f$, and \eqref{nshfeq15} is proved.

{\it Let us prove \eqref{nshfeq17} assuming that \eqref{nshfeq16} holds.} 

From \eqref{nsh2eq3} and \eqref{nsh1eq11}  with $n=n_\delta-1$, 
and from \eqref{nsh3eq23}, one gets
\begin{align*}
C_1\delta^\zeta &\le M_1 \frac{a_{n_\delta-1}^2}{\lambda} + 
a_{n_\delta-1}\|V_{n_\delta-1}\|
\le M_1 \frac{a_{n_\delta-1}^2}{\lambda} + \|y\|a_{n_\delta-1}+\delta.
\end{align*}
If $\delta>0$ is sufficiently small, then the above equation implies
$$
\tilde{C}\delta^\zeta \le a_{n_\delta-1} \bigg{(}\frac{M_1a_0}{\lambda}+\|y\|\bigg{)},\quad \tilde{C}>0,
$$
where $\tilde{C}<C_1$ is a constant, and the inequality $ 
a^2_{n_\delta-1}\le a_{n_\delta-1}a_0$ was used. 
Therefore, by \eqref{nshyeq22}, 
\begin{equation}
\label{nsh1eq14}
\lim_{\delta\to 0} \frac{\delta}{2a_{n_\delta}}\le
\lim_{\delta\to 0} \frac{\delta}{a_{n_\delta-1}}\le
\lim_{\delta\to 0} \frac{\delta^{1-\zeta}}{\tilde{C}}\bigg{(}\frac{M_1a_0}{\lambda}+\|y\|\bigg{)}
=0,\quad 0<\zeta<1.
\end{equation}
In particular, for $\delta=\delta_m$, one gets
\begin{equation}
\label{1eq144}
\lim_{\delta_m\to 0} \frac{\delta_m}{a_{n_m}} = 0.
\end{equation}
From the triangle inequality and inequalities \eqref{rejected11} and \eqref{nshceq15} one obtains
\begin{equation}
\label{eqzx56}
\begin{split}
\|u_{n_m} - y\| &\le \|u_{n_m} - V_{n_m}\| + 
\|V_n - V_{{n_m},0}\| + \|V_{{n_m},0}-y\|\\
&\le \frac{a^2_{n_m}}{\lambda} + \frac{\delta_m}{a_{n_m}} + \|V_{{n_m},0}-y\|.
\end{split}
\end{equation}
From \eqref{nshfeq16}, \eqref{1eq144}, inequality \eqref{eqzx56} and Lemma~\ref{rejectedlem}, one obtains
\eqref{nshfeq17}. Theorem~\ref{mainthm2} is proved.
\end{proof}

\section{Numerical experiments}
\label{numsec}

Let us do a numerical experiment solving nonlinear equation \eqref{aeq1} with 
\begin{equation}
\label{1eq41}
F(u):= B(u)+\frac{u^3} {6}:=\int_0^1e^{-|x-y|}u(y)dy + \frac{u^3}{6},
 \quad f(x): = \frac {13}{6}-e^{-x}-\frac{e^x}{e}.
\end{equation}
Such equation is a model nonlinear equation in Wiener-type filtering 
theory, see \cite{R486}.

One can check that $u(x)\equiv 1$ solves  the equation $F(u)=f$.
The operator $B$ is compact in $H=L^2[0,1]$. The operator $u\longmapsto u^3$
is defined on a dense subset $D$ of of $L^2[0,1]$, for example, on $D:=C[0,1]$.
If $u,v\in D$, then 
$$
\langle u^3-v^3,u-v\rangle = \int_0^1(u^3-v^3)(u-v)dx \ge 0.
$$ 
Moreover, 
$$
e^{-|x|} = \frac{1}{\pi}\int_{-\infty}^\infty \frac{e^{i\lambda x}}{1+\lambda^2} d\lambda.
$$
Therefore, $\langle B(u-v),u-v\rangle\ge0$, so 
$$
\langle F(u-v),u-v\rangle\ge0,\qquad \forall u,v\in D.
$$

Note that $D$ does not contain subsets, open in $H=L^2[0,1]$, i.e., it does not contain interior points of $H$.
This is a reflection of the fact that the operator $G(u)=\frac{u^3}{6}$
is unbounded on any open subset of $H$.
For example, in any ball $\|u\|\le C$, $C=const>0$, where $\|u\|:=\|u\|_{L^2[0,1]}$, there is an element $u$ such that
$\|u^3\|=\infty$. As such an element one can take, for example, $u(x)=c_1 x^{-b}$, $\frac{1}{3}<b<\frac{1}{2}$. here $c_1>0$ is a constant chosen so that
$\|u\|\leq C$.
The operator $u\longmapsto F(u)=G(u)+B(u)$ is maximal monotone on $D_F:=\{u:u\in H,\, F(u)\in H\}$ (see \cite[p.102]{D}),
so that equation \eqref{2eq2} is uniquely solvable for any $f_\delta\in H$.

The Fr\'{e}chet derivative of $F$ is:
\begin{equation}
\label{eq44}
F'(u)h = \frac{u^2 h}{2} + \int_0^1 e^{-|x-y|}h(y) dy.
\end{equation}
If $u(x)$ vanishes on a set of positive Lebesgue's measure, then $F'(u)$
is obviously not boundedly invertible.
If $u\in C[0,1]$ vanishes even at one point $x_0$, then $F'(u)$ is not boundedly invertible in $H$.

Let us use the 
iterative process \eqref{nsh3eq12}:
\begin{equation}
\begin{split}
u_{n+1} &= u_n - \alpha_n \big{(}F'(u_n)^*+a_nI \big{)}(F(u_n)+a_nu_n - f_\delta),\\
u_0 &= 0.
\end{split}
\end{equation}
We stop iterations at $n:=n_\delta$ such that the following inequality holds
\begin{equation}
\label{eq53}
\|F(u_{n_\delta}) - f_\delta\| <C \delta^\zeta,\quad
\|F(u_{n}) - f_\delta\|\ge C\delta^\zeta,\quad n<n_\delta ,\quad C>1,\quad \zeta \in(0,1).
\end{equation}
Integrals of the form 
$\int_0^1 e^{-|x-y|}h(y)dy$ in \eqref{1eq41} and \eqref{eq44} are computed by 
using
the trapezoidal rule. The noisy function used in the test is
$$
f_\delta(x) = f(x) + \kappa f_{noise}(x),\quad \kappa>0.
$$
The noise level $\delta$ and the relative noise level are determined by 
$$
\delta = \kappa\| f_{noise}\|,\quad \delta_{rel}:=\frac{\delta}{\|f\|}.
$$
In the test, $\kappa$ is computed in such a way that the relative noise level
$\delta_{rel}$ equals to some desired value, i.e.,
$$
\kappa = \frac{\delta}{\| f_{noise}\|}=\frac{\delta_{rel}\|f\|}{\| f_{noise}\|}.
$$
We have used the relative noise level as an input parameter in the test.

The version of DSM, developed in this paper and denoted by DSMG, is compared with the version of DSM in \cite{546},
denoted by DSMN. Indeed, the DSMN is the following iterative scheme
\begin{equation}
u_{n+1} = u_n - A_n^{-1}\big{(}F'(u_n) + a_n u_n -f_\delta\big{)},\quad u_0 = u_0,\qquad n\ge 0,
\end{equation}
where $a_n = \frac{a_0}{1+n}$. This iterative scheme is used with a stopping time $n_\delta$
defined by \eqref{nsh2eq3}. The existence of this stopping time and the convergence of the method is
proved in \cite{546}.

As we have proved, the DSMG converges when $a_n=\frac{a_0}{(1+n)^b},\, b\in (0,\frac{1}{4}],$
and $a_0$ is sufficiently large. However, in practice, if we choose $a_0$ too large then the method will use 
too many iterations before reaching the stopping time $n_\delta$ in \eqref{eq53}. This means that the computation time
is large. Since
$$
\|F(V_\delta) - f_\delta\| = a(t)\|V_\delta\|,
$$ 
and $\|V_\delta(t_\delta) - u_\delta(t_\delta)\|=O(a(t_\delta))$, we have
$$
C\delta^\zeta =\|F(u_\delta(t_\delta)) - f_\delta\| \sim a(t_\delta).
$$
Thus, we choose
$$
a_0 = C_0\delta^\zeta,\qquad C_0>0.
$$
The parameter $a_0$ used in the DSMN is also chosen by this formula.

In all figures, the $x$-axis represents the variable $x$. In all figures, by {\it DSMG} we denote 
the numerical solutions obtained by the DSMG, by {\it DSMN} we denote
solutions by the DSMN and by {\it exact} we denote the exact solution.

In experiments, we found that the DSMG works well with $a_0 = C_0\delta^\zeta$, $C_0\in[0.2,1]$. 
Indeed, in the test the DSMG is implemented with $a_n := C_0\frac{\delta^{0.99}}{(n+1)^{0.25}}$, $C_0=0.5$
while the DSMN is implemented with $a_n := C_0\frac{\delta^{0.99}}{(n+1)}$, $C_0=1$.
 For $C_0>1$ the convergence rate of DSMG is much slower while the DSMN still works well if $C_0\in [1,4]$. 
 
Figure~\ref{figmono} plots the solutions using relative noise levels 
$\delta=0.01$ and $\delta = 0.001$. 
The exact solution used in these experiments is $u=1$. 
In the test the DSMG is implemented with $\alpha_n=1$, $C = 1.01$, $\zeta = 0.99$ 
and $\alpha_n=1,\,\forall n\ge0$. 
The number of iterations of the DSMG
for $\delta=0.01$ and $\delta = 0.001$ were 49 and 50
while the number of iteration for the DSMN are 9 and 9, respectively. 
The number of node points used in computing integrals in \eqref{1eq41} and \eqref{eq44}
was $N = 100$. The noise function $f_{noise}$ in this experiment is a vector with random entries 
normally distributed of mean 0 and variant 1. 
Figure~\ref{figmono} shows that the solutions by the DSMN and DSMG are nearly the same in this
figure.

\begin{figure}[!h!tb]
\centerline{%
\includegraphics[scale=0.95]{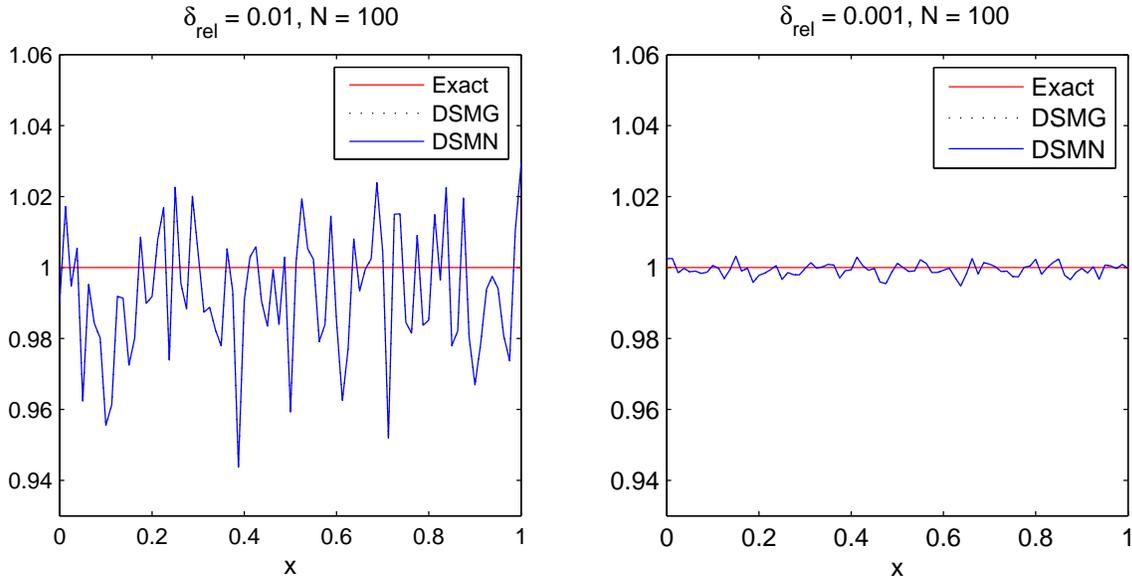}
}
\caption{Plots of solutions obtained by the DSMN and DSMG when $N = 100$, $u=1$, $x\in[0,1]$, $\delta_{rel}=0.01$ (left) 
and $N = 100$, $u=1$, $x\in[0,1]$, $\delta_{rel}=0.001$ (right).}
\label{figmono}
\end{figure}

Figure~\ref{figmono2} presents the numerical results when $N = 100$ with $\delta=0.01$ $u(x)=\sin(2\pi x)$, $x\in[0,1]$
(left) and with $\delta = 0.01$, $u(x)=\sin(\pi x)$, $x\in[0,1]$ (right).
In these cases, the DSMN took 11 and 7 iterations to give the numerical solutions while
the DSMG took 512 and 94 iterations for
$u(x)=\sin(2\pi x)$ and $u(x)=\sin(\pi x)$, respectively.
Figure~\ref{figmono2} show that the numerical results of the DSMG are better than those of the DSMN.

\begin{figure}[!h!tb]
\centerline{%
\includegraphics[scale=0.96]{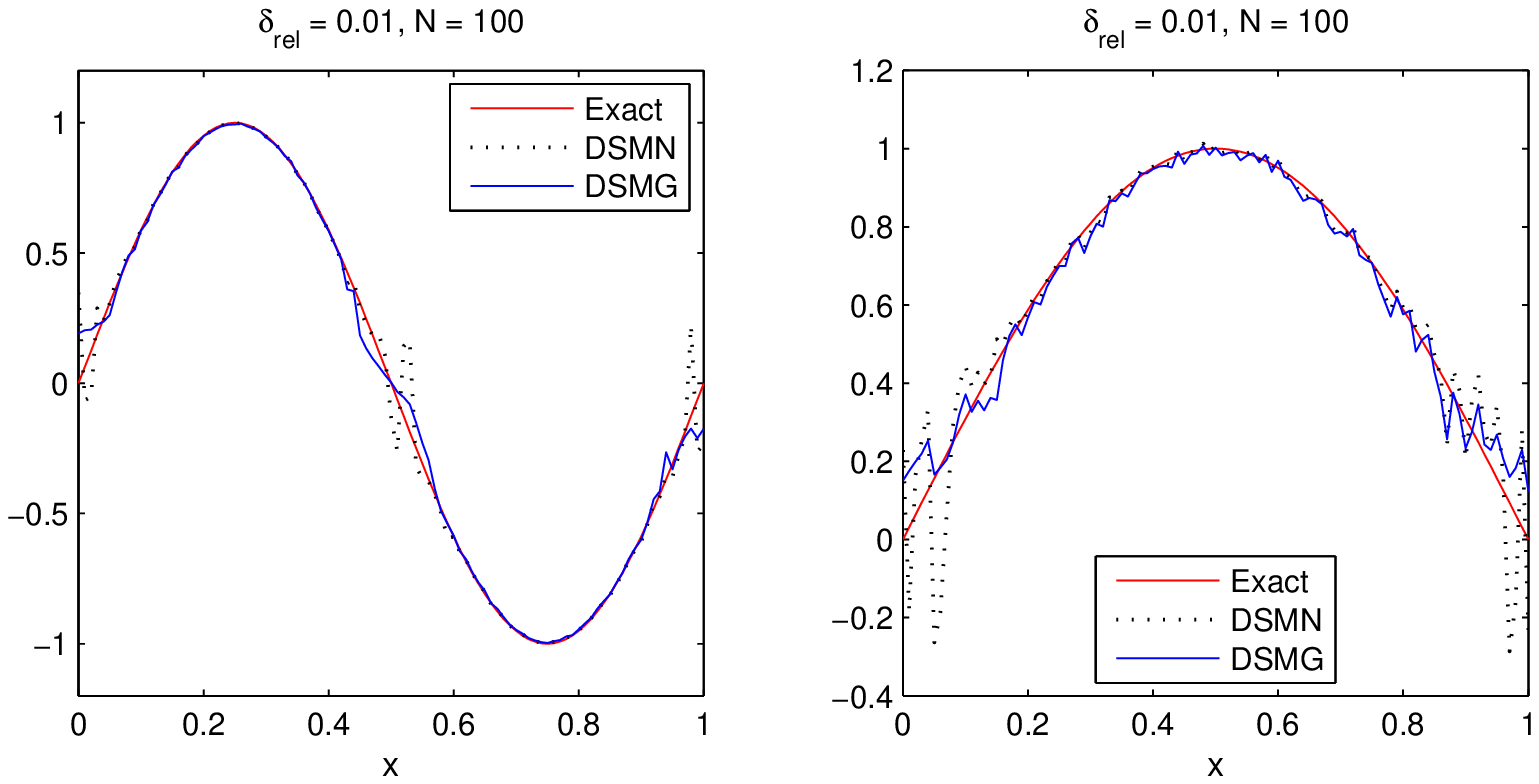}
}
\caption{Plots of solutions obtained by the DSMN and DSMG when $N = 100$, $u(x)=\sin(2\pi x)$, $x\in[0,1]$, $\delta_{rel}=0.01$ (left) 
and $N = 100$, $u(x)=\sin(\pi x)$, $x\in[0,1]$, $\delta_{rel}=0.01$ (right).}
\label{figmono2}
\end{figure}

Numerical experiments agree with the theory that the convergence rate of the DSMG is slower than that of the DSMN.
It is because the rate of decaying of the sequence $\{\frac{1}{(1+n)^\frac{1}{4}}\}_{n=1}^\infty$ 
is much slower than that of the sequence $\{\frac{1}{1+n}\}_{n=1}^\infty$. However, if the cost for evaluating
$F$ and $F'$ are not counted then the cost of computation at one iteration of the DSMG is of $O(N^2)$
 while that of the DSMN in one iteration of the DSMN is of $O(N^3)$.
 Here $N$ is the number of the nodal points. 
Thus, for large scale problems, the DSMG might be an alternative to the DSMN.
Also, as it is showed in Figure~\ref{figmono2}, the DSMG might yield solutions with better accuracy.

Experiments show that the DSMN still works with $a_n=\frac{a_0}{(1+n)^b}$ for $\frac{1}{4}\le b\le 1$.
So in practice, one might use faster decaying sequence $a_n$ to reduce the time of computation.

From the numerical results we conclude that the proposed stopping rule yields 
good results in this problem.

\end{document}